\documentclass{amsart}

\usepackage{amssymb}
\usepackage{mathrsfs}
\usepackage{xypic}
\usepackage{url}
\usepackage[dvipsnames]{xcolor}
\usepackage{graphicx}
\usepackage{etoolbox}
\usepackage{hyperref}
\usepackage{bbding}
\usepackage{pifont}
\usepackage{tikz}

\newtheorem{theorem}[equation]{Theorem}
\newtheorem{lemma}[equation]{Lemma}
\newtheorem{proposition}[equation]{Proposition}

\theoremstyle{definition}

\newtheorem{conjecture}[equation]{Conjecture}

\newtheorem{definition}[equation]{Definition}

\theoremstyle{plain}
\numberwithin{equation}{section} 
\preto{\table}{\stepcounter{equation}}

\preto{\figure}{\stepcounter{equation}}

\begin{document}

\title{A Weak Solution of Inverse Treblecross}

\author{Kai Liang}
\address{School of Mathematics and Statistics\\Xidian University\\266 Xinglong Section, Xifeng Road\\710126 Xi'an, Shaanxi\\China}
\email{24071213197@stu.xidian.edu.cn}

\author{Muxi Li}
\address{Department of Mathematics\\School for Mathematics Science\\Tiangong University\\Binshuixi Road 399\\300190 Tianjin\\China}
\email{limuxi@ustc.edu.cn}

\begin{abstract}
We give a weak solution for an impartial game we called ``Inverse Treblecross''. We have determined which of its starting positions are $\mathscr{P}$-position 
and give a reasonable strategy. 
\end{abstract}

%\subjclass[2010]{}

%\keywords{}

\maketitle

\definecolor{green}{rgb}{0.0, 0.5, 0.05}
\definecolor{teal}{rgb}{0.0, 0.5, 0.5}
\definecolor{purple}{rgb}{0.5, 0.0, 0.5}
\definecolor{fuchsia}{rgb}{1.0, 0.0, 1.0}
% see latexcolor.com;  not all names are defined it seems, hence the definition for \fuchsia here.

\long\def\blue#1{{\color{blue}#1}}
\long\def\green#1{{\color{green}#1}}
\long\def\red#1{{\color{red}#1}}

\long\def\teal#1{{\color{teal}#1}}
\long\def\purple#1{{\color{purple}#1}}

\def\nocolors{\long\def\blue##1{##1}\long\def\green##1{##1}\long\def\red##1{##1}}

% \nocolors

\def\nocoloredcomments{\long\def\blue##1{}\long\def\green##1{}\long\def\red##1{}}

% \nocoloredcomments

% A useful definition for boxing things
\def\boxit#1{\lower4pt\hbox{\vrule\vbox{\hrule\vskip5pt\hbox{\vbox{\hsize=10cm\noindent#1}}\vskip3pt\hrule}\vrule}}

% put in a warning about preliminary version
\def\prelimwarning{\centerline{\boxit{\centerline{EXTREMELY PRELIMINARY VERSION}\\\centerline{DO NOT DISTRIBUTE}}}\medskip}

% a useful box after it has been submitted to remember which version it was
% \def\prelimwarning{\centerline{\boxit{\centerline{SUBMITTED VERSION}}}\medskip}

% \prelimwarning

% create a timestamp at the beginning of the paper, and at top and bottom of pages 2, etc.

\def\boxit#1{\lower4pt\hbox{\vrule\vbox{\hrule\vskip3pt\hbox{\vbox{\hsize=10cm\noindent#1}}\vskip3pt\hrule}\vrule}}
\def\timestring{\count0=\time \divide\count0 by 60 \count2=\count0 \multiply\count0 by 60 \count4=\time \advance\count4 by -\count0%
\count6=10 
\ifnum\count4 <\count6 \toks0={0} \else \toks0={}\fi%
\number\count2 :\the\toks0 \number\count4}
\def\datetimewide{Date:\ \today\qquad\qquad Time:\ \timestring}
\def\timestamp{\centerline{\boxit{\hfil\datetimewide\hfil}}\medskip}
\def\datetimeshort{\today\quad{\timestring}}

%\timestamp

\makeatletter
\def\@oddhead{\tiny page \thepage\hfil\datetimeshort\hfil page \thepage}\def\@evenhead{\tiny page \thepage\hfil\datetimeshort\hfil page \thepage}
\def\@oddfoot{\tiny page \thepage\hfil page \thepage}\def\@evenfoot{\tiny page \thepage\hfil page \thepage}
\makeatother

% In order to create double space uncomment the following line:
% \baselineskip=2\baselineskip

\newcommand{\varG}{\mathscr{G}}

\newcommand{\E}{
    \tikz[baseline=-0.6ex]{
        \draw[thick] (0,0) circle (1ex);
    }
}

\newcommand{\X}{
    \tikz[baseline=-0.6ex]{
        \draw[thick, line cap=round] (-0.9ex, -0.9ex) -- (0.9ex, 0.9ex);
        \draw[thick, line cap=round] (-0.9ex, 0.9ex) -- (0.9ex, -0.9ex);
    }
}
\newcommand{\MT}[1]{\overline{#1}}
\newcommand{\ResT}{\mathbb{Z}/10\mathbb{Z}}
\newcommand{\D}{\diamond_3}

\newcommand{\bbD}{\mathbb{D}}
\newcommand{\mex}{\mathrm{mex}}
\newcommand{\nex}{\mathrm{next}}
\section{Introduction} \label{sec:intro}
Combinatorial games of ``Making and avoiding'' constitute a well-studied and richly varied domain within combinatorial game theory. 
While the majority of classic positional games in this category—such as Tic-Tac-Toe, Gomoku, and Connect-4—are partisan, 
featuring players utilizing distinct sets of pieces, there is profound mathematical interest in their impartial counterparts, 
where both players share a single reservoir of indistinguishable pieces.

A paradigmatic example of such an impartial game is ``Treblecross'', extensively analyzed by Berlekamp, Conway, and Guy in \textit{Winning Ways} \cite{BCG01}. 
Played on a $1\times n$ strip of squares, both players take turns placing an ``X'', and the first player to complete a contiguous line of three wins. 
In the framework of CGT, Treblecross is equivalent to the octal game $0.007$. 
Despite its apparent simplicity and decades of study, it remains notoriously open whether the Sprague-Grundy sequence of Treblecross is ultimately periodic.

In this paper, we direct our attention to the avoidance version, or following the terminology of Beck \cite{Bec08}, the ``inverse'' version of Treblecross. 
In \textit{Inverse Treblecross}, the objective is reversed: completing a line of three is strictly prohibited. Consequently, the game is played under the normal play convention, 
where a player loses when they are faced with a board configuration and possess no legal moves that do not create a three-in-a-row.

To properly contextualize this game, it is instructive to compare it with its lower-order analogue, which one might call ``Inverse Duocross'' (avoiding two-in-a-row). 
In Inverse Duocross, placing a piece effectively renders the played square and its immediate adjacent squares unplayable. Thus, a single move cleanly severs the board into independent contiguous empty segments. 
This property makes Inverse Duocross structurally equivalent to (the normal play of) Dawson's Chess (the octal game $0.137$), which effortlessly breaks down into disjunctive sums of independent subgames and possesses a famously periodic SG sequence.

The primary difficulty in analyzing Inverse Treblecross arises fundamentally from its resistance to such straightforward disjunctive sum decomposition. 
Unlike Dawson's Chess, a move in Inverse Treblecross does not inherently isolate the remaining empty spaces into independent subgames. 
Because pieces are permitted to be adjacent (up to a block of two), the legality of a move within an empty segment depends heavily on the proximity and specific configuration of the pieces bounding that segment. 
Consequently, an arbitrary move cannot be trivially evaluated as the nim-sum (direct XOR sum) of two completely independent smaller components, necessitating a much more nuanced state-space analysis to evaluate the game's SG values.

%{\color{red}Rewrite this paragraph.} Done.
It is also highly relevant to point out the game's conceptual kinship with ``Notakto'' (also known as Neutral or Impartial Tic-Tac-Toe), which has generated considerable interest in recent literature \cite{PW16}. 
While Notakto is typically played on one or more 2D grids with the objective of forcing the opponent to complete a three-in-a-row, Inverse Treblecross serves as its foundational 1D analog. Resolving its mathematical structure provides crucial groundwork for understanding higher-dimensional impartial avoidance games.

The ultimate objective of this paper is to evaluate the game from its initial state. In the standard terminology of game solving, 
our work provides a \textit{weak solution} for Inverse Treblecross. While evaluating the exact SG value for an arbitrary, partially filled board (a \textit{strong solution}) remains an intricate open problem due to the aforementioned boundary complexities, we successfully resolve the SG sequence for initially empty boards.
Surprisingly, we demonstrate that the SG values for an empty board are strictly restricted to $0$ and $1$. 
Specifically, we prove the following main theorem:
\begin{theorem}
An empty board with $n$ points has Sprague-Grundy value $0$ (i.e. second player wins) if and only if $n\equiv 0,2,3,6,9(\mathrm{mod}\ 10)$. 
For the rest cases, i.e when $n \equiv 1,4,5,7,8(\mathrm{mod}\ 10)$, the Sprague-Grundy value is $1$.
\end{theorem}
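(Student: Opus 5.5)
We reduce the game to a family of \emph{bounded segments}, compute their Sprague--Grundy values through a finite list of periodic formulas, and then evaluate the empty board from these formulas.

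\textbf{Step 1: reduction to segments.} Take any position reachable from the empty board. Call a cell dead if placing an X there would complete three in a row. Cut the board at every dead cell, and also inside every run of two adjacent X's. Every remaining region is a maximal interval of empty cells. Its playability depends only on a bounded amount of context at each end:
\begin{itemize}
\item whether the end is a wall or a dead cut;
\item a single X adjacent to the interval;
\item a single X adjacent to a dead cell that is adjacent to the interval;
\item a double X.
\end{itemize}
The key point to verify is locality. A move can only change the status of cells at distance at most $2$ from it. Hence, once an interval is bounded by such boundary types on both sides, the whole position is a disjunctive sum of these bounded segments. Let $\bbD$ be the finite set of boundary types. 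We then have finitely many families $G_{a,b}(m)$, indexed by $a,b\in\bbD$ and the length $m$. The empty board of $n$ points is $G_{w,w}(n)$, where $w$ denotes the wall type.

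\textbf{Step 2: the recursion.} For each family, a move at position $i$ in $G_{a,b}(m)$ produces a sum $G_{a,c}(i')\oplus G_{c',b}(m')$. Here $c,c'$ and the small length shifts are determined by the cells around $i$. There are special cases when the move is adjacent to an end, since then it upgrades that end's boundary type. This gives
\[
\varG(G_{a,b}(m))=\mex\{\varG(G_{a,c}(i'))\oplus \varG(G_{c',b}(m'))\}.
\]
So everything reduces to computing a finite system of SG sequences.

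\textbf{Step 3: periodic formulas.} We conjecture, by computing the sequences for small $m$ by hand or machine, that each $\varG(G_{a,b}(m))$ takes only small values and is purely periodic with period $10$ after a short preperiod. We then prove all these closed forms at once by simultaneous strong induction on $m$. We use the standard octal-game periodicity argument: once the tables are verified up to about twice the preperiod plus the period plus the maximal shift, every option of a longer segment has one long piece. By induction that piece follows its periodic formula. Thus the option set modulo $10$ repeats, and so does its mex. Concretely, for each residue class of $m$ modulo $10$ and each pair $(a,b)$, we must exhibit:
\begin{itemize}
\item for every value $v$ below the claimed value, a split $i$ with $\varG(\text{left})\oplus\varG(\text{right})=v$;
\item that no split achieves the claimed value itself.
\end{itemize}
Both are finite checks over residues of $i$ modulo $10$.

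\textbf{Step 4: the theorem.} Specialising to $a=b=w$ gives the claimed pattern. The value is $0$ for $n\equiv 0,2,3,6,9$ and $1$ otherwise, which also yields the winning strategy. For example, when $n\equiv1,4,5,7,8$ we exhibit an explicit first move leaving a sum of value $0$, for instance a central symmetric move when parity permits.

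\textbf{Main obstacle.} The hard part is Step 3. The number of boundary types makes the case check large. Moreover, showing that values never reach $2$ or more, or at least that they are bounded, is what makes the XOR table finite. If the general segments are not bounded by $1$, I would first try to prove a uniform bound, say $\le 3$, by the same induction, and then run the residue check.
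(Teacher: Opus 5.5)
\textbf{The gap is in Step 1, and Steps 2 and 3 fall with it: in this game a single piece does not cut the board.}

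After the very first interior move the position is $[aXb]$. It has no dead cell and no $XX$, so your cutting rule does nothing, and the region is not an interval of empty cells. Cutting at the lone $X$ does not help, because the two sides are not a disjunctive sum: a piece placed immediately on one side of $X$ kills the cell immediately on the other side. The smallest example is $[1X1]$. Either move kills the other cell, so $\mathcal{G}([1X1])=1$, whereas $[1X]\oplus[X1]$ has value $1\oplus 1=0$.

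The same coupling occurs across every isolated piece, whatever the gap lengths. For instance, playing in a gap of length $2$ kills the cell on the far side of the neighbouring piece. So the indecomposable components reachable from the empty board are chains $[a_0Xa_1X\cdots Xa_k]$ with unboundedly many pieces and independent gap lengths. This has three consequences:
\begin{itemize}
\item There is no finite set $\mathbb{D}$ of boundary types and no finite system of one-parameter sequences $G_{a,b}(m)$.
\item The recursion in Step 2 is false: one move inside an interval does not produce $G_{a,c}(i')\oplus G_{c',b}(m')$.
\item The octal periodicity argument of Step 3 has nothing to act on.
\end{itemize}
Even the simplest coupled family behaves badly. The paper's table gives $\mathcal{G}([4X14])=\mathcal{G}([9X9])=5$, which also defeats your fallback bound $\le 3$. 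Positions $[XaXbX]$ show no visible periodicity. In effect your plan is a strong solution, which the paper leaves open. Step 4 has a related problem: no mirror argument is available on a linear board, since after a central move a reply adjacent to the centre makes the mirror cell illegal.

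\textbf{The missing idea is to evaluate only a small class of positions that is closed under a reply strategy.} The paper uses the regular positions $R=R_1\cup R_{-1}$:
\begin{itemize}
\item empty boards $[a]$, with sign $\chi_1(a)$;
\item one-piece boards $[aXb]$ with $\chi_2(a)\chi_2(b)=\pm1$, that product being the sign;
\item the classes $[X\overline{6}]$ and $[X\overline{4}X]$ (sign $1$), $[X\overline{1}]$ and $[X\overline{9}X]$ (sign $-1$), together with their reflections.
\end{itemize}
Although a lone piece does not split the board, a move at distance $1$ or $2$ from an existing piece \emph{does}: $[w1XX1v]=[w]\oplus[v]$ and $[wX1Xv]=[wX]\oplus[Xv]$. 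Using this, the paper shows that from $R_s$ every opponent move does one of two things. Either it already leaves a sum of regular positions with sign product $-s$, or it can be answered by such a splitting move, leaving a sum of regular positions with sign product $s$. Lemma~\ref{lem:alpha} guarantees that one of the two adjacent replies works. Positions in $R_{-1}$ also have a move to sign product $+1$. All values on $R$ are $0$ or $1$, so nim-sum reduces to sign product, and strong induction finishes the proof. The large SG values of non-regular positions are never needed.
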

\section{Preliminaries} \label{sec:prelim}

To describe positions in Inverse Treblecross, 
we adopt a notation system analogous to the Forsyth-Edwards Notation (FEN) used in chess. 
The symbols are defined as follows:

\begin{itemize}
    \item Boundaries: The symbols ``['' and ``]'' denote the left and right ends of the board, respectively.
    \item Pieces: The symbol $X$ represents an occupied cell.
    \item Gaps: A natural number $n$ represents a sequence of $n$ consecutive empty cells.
    \item Residue Classes: The symbol $\overline{n}$ denotes a sequence of empty cells with length $k \equiv n \pmod{10}$. Consequently, a notation string containing $\overline{n}$ represents the set of all positions satisfying this congruence condition.
\end{itemize}

\iffalse
We use $\E$ to denote an empty space and $\X$ to denote an occupied space. 
We will use a number $n$ to denote $n$ continuous empty positions, 
just like how FEN denotes a chess board position.

To avoid ambiguity, we use ``['' and ``]'' to denote the end of the board.
\fi
We use $\MT{n}$ to denote the residue class of $n$ modulo $10$. 
By abuse of notation, this notation may specifically refer to the set of positive integers 
within this class depending on the context.

Since $\gcd(3, 10) = 1$, the element $3$ is invertible in $\ResT$. 
Consequently, the map $x \mapsto x/3$ (equivalently $x \mapsto 7x$) 
acts as a permutation on the residue classes. 
We introduce two characteristic functions based on the location of $x/3$ in $\ResT$:
\iffalse
We extend the addition operation to sets in the canonical way: 
$A\pm B = \{a\pm b | a\in A, b\in B\}$ respectively. 
Consequently, for a single element $a$, we write $a+B$ to denote $\{a\}+B$.
The definitions for $a-B$ and $B-a$ are analogous.

We introduce the operator $\D$, which generates a symmetric arithmetic progression of difference $3$. 
Precisely, for $a, b \in \mathbb{Z}$, we define $a \D b = \{a+3i \mid -b \le i \le b\}$. 
This notation extends naturally to $\ResT$ by taking residue classes and interpreting the second operand as its least non-negative representative. 
Finally, for set $A$, we define $A \D b = \bigcup_{a \in A} (a \D b)$.
\fi

$$\chi_1(a)=\begin{cases}
    1,  &a/3\in\{\MT{0},\MT{1},\MT{2},\MT{3},\MT{4}\}\\
    -1, &\text{Otherwise}
\end{cases}$$

$$\chi_2(a)=\begin{cases}
    1, &a/3\in\{\MT{-1},\MT{0},\MT{1}\}\\
    -1,&a/3\in\{\MT{4},\MT{5},\MT{6}\}\\
    0, &\text{Otherwise}
\end{cases}$$

The following figure might help understanding how these two characterstic behaves on $\ResT$:

\begin{figure}[htbp]
    \centering
    \begin{tikzpicture}[scale=0.7]
        \foreach \ang/\v in {-18/0, 2*36-18/2, 3*36-18/3, 6*36-18/6, 9*36-18/9} {
            \filldraw[draw=black,fill=white] 
                (\ang:1.5) -- (\ang:2.5) 
                arc (\ang:\ang+36:2.5) -- (\ang+36:1.5) 
                arc (\ang+36:\ang:1.5) -- cycle;
            \node at (\ang+18:3) {\Large \v};
        }
        \foreach \ang/\v in {1*36-18/1, 4*36-18/4, 5*36-18/5, 7*36-18/7, 8*36-18/8} {
            \filldraw[draw=black,fill=gray!50] 
                (\ang:1.5) -- (\ang:2.5) 
                arc (\ang:\ang+36:2.5) -- (\ang+36:1.5) 
                arc (\ang+36:\ang:1.5) -- cycle;
            \node at (\ang+18:3) {\Large \v};
        }
        \foreach \ang in {0, 3*36, 7*36} {
            \filldraw[draw=black,fill=white] (\ang:2) circle (0.25);
        }
        \foreach \ang in {2*36, 5*36, 8*36} {
            \filldraw[draw=black,fill=gray] (\ang:2) circle (0.25);
        }
        \filldraw[draw=black,fill=white] (3.9,1.9)--(3.9,2.6)--(4.6,2.6)--(4.6,1.9)--cycle; 
        \node at (4.5,2.25) [right] {\large $\chi_1(a)=1$};
        
        \filldraw[draw=black,fill=gray!50] (3.9,0.9)--(3.9,1.6)--(4.6,1.6)--(4.6,0.9)--cycle; 
        \node at (4.5,1.25) [right] {\large $\chi_1(a)=-1$};
        
        \filldraw[draw=black,fill=white] (4.25,0.25) circle (0.25); 
        \node at (4.5,0.25)[right] {\large $\chi_2(a)=1$};
        
        \filldraw[draw=black,fill=gray] (4.25,-0.75) circle (0.25); 
        \node at (4.5,-0.75) [right] {\large $\chi_2(a)=-1$};
        
        \draw (4.1,-1.9) -- (4.4,-1.6); 
        \draw (4.4,-1.9) -- (4.1,-1.6); 
        \node at (4.5,-1.75)[right] {\large $\chi_2(a)=0$};

    \end{tikzpicture}
    
    \caption{This circular chessboard shows the values of $\chi_1(a)$ and $\chi_2(a)$, represented by the colors of squares and pieces, respectively.}
\end{figure}
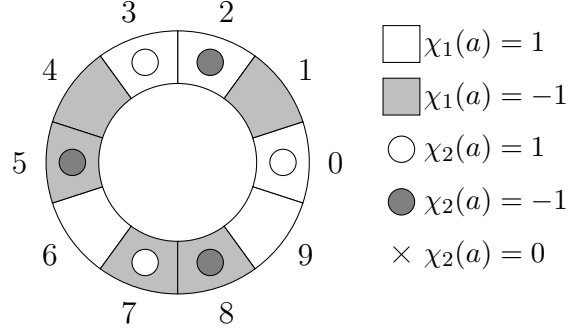

The following properties will be useful in the future:
\begin{proposition}
    The following equations hold for all $a\in\ResT$:
    \begin{itemize}
        \item $\chi_1(-a)=-\chi_1(a)$
        \item $\chi_1(5+a)=-\chi_1(a)$
        \item $\chi_1(5-a)=\chi_1(a)$
        \item $\chi_2(-a)=\chi_2(a)$
        \item $\chi_2(5+a)=-\chi_2(a)$
        \item $\chi_2(5-a)=-\chi_2(a)$
        \item When $\chi_2(a)\neq 0$, $\chi_1(a-1)=\chi_2(a)$, $\chi_1(a-2)=-\chi_2(a)$
    \end{itemize}
\end{proposition}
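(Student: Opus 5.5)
The plan is to work in the variable $t=a/3=7a\in\ResT$, where both characters become interval indicators. Since $x\mapsto 7x$ is a ring automorphism of $\ResT$, we have $(-a)/3=-t$, $(5+a)/3=35+t\equiv 5+t$, $(5-a)/3\equiv 5-t$, $(a-1)/3=t-7\equiv t+3$ and $(a-2)/3=t-14\equiv t+6$. So, writing $\psi_1(t)=\chi_1(3t)$ and $\psi_2(t)=\chi_2(3t)$:
\begin{itemize}
\item $\psi_1=1$ exactly on $\{0,1,2,3,4\}$ and $-1$ on $\{5,6,7,8,9\}$;
\item $\psi_2=1$ exactly on $\{9,0,1\}$, $-1$ on $\{4,5,6\}$, and $0$ on $\{2,3,7,8\}$.
\end{itemize}
Each item of the statement then becomes an identity about these window functions under $t\mapsto -t$, $t\mapsto t+5$ and $t\mapsto 5-t$, which I verify in turn.

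\textbf{The translation items.} Translation by $5$ swaps the two halves $\{0,\dots,4\}$ and $\{5,\dots,9\}$, which gives $\chi_1(5+a)=-\chi_1(a)$. It also maps the window $\{9,0,1\}$ onto $\{4,5,6\}$ and fixes $\{2,3,7,8\}$ setwise, which gives $\chi_2(5+a)=-\chi_2(a)$.

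\textbf{The reflection items.} The window $\{9,0,1\}$ is symmetric about $0$, and so are $\{4,5,6\}$ and $\{2,3,7,8\}$; hence $\chi_2(-a)=\chi_2(a)$. Combining this with the translation item gives $\chi_2(5-a)=\chi_2(-(a-5))$, and using $-5\equiv 5$ this equals $\chi_2(a+5)=-\chi_2(a)$. For $\chi_1$, the map $t\mapsto -t$ sends $\{1,2,3,4\}$ to $\{9,8,7,6\}$, so $\chi_1(-a)=-\chi_1(a)$ holds for all $t\ne 0,5$. Then $\chi_1(5-a)=\chi_1(5+(-a))=-\chi_1(-a)=\chi_1(a)$ follows from the translation item.

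\textbf{The last item.} If $\chi_2(a)=1$, then $t\in\{9,0,1\}$, so $t+3\in\{2,3,4\}$ and $t+6\in\{5,6,7\}$. This gives $\chi_1(a-1)=1$ and $\chi_1(a-2)=-1$. The case $\chi_2(a)=-1$ follows by applying the shift by $5$ to everything, using the two translation items already proved.

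\textbf{Main obstacle: the fixed points $t=0,5$.} These are the points with $a\equiv 0,5$. Under a literal reading, $\chi_1(-0)=\chi_1(0)=1$, so the first item fails at $a=0$. Likewise $\chi_1(5-5)=\chi_1(0)=1$ while $\chi_1(5)=-1$, so the third item fails at $a=5$. Since this is a ten-element check, I would first tabulate $\chi_1,\chi_2$ on all of $\ResT$ to confirm this.

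I would then present the proof of the statement exactly as worded, but remark explicitly on these two points. The intended meaning must be either a symmetric convention at the two fixed points of $t\mapsto -t$, or that the identities are only invoked away from them. I would check, at each later use of the first or third item, that $a\not\equiv 0,5$ there. All the other items hold on all of $\ResT$ without exception.
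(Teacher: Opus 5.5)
Your proof is correct and follows the paper's own argument. The paper's entire proof is ``by the symmetric properties of our characteristic and $\overline{5}/3=\overline{5}$'', which is exactly your reading of the windows in the coordinate $t=7a$.

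Your fixed-point objection is also correct, and the paper's one-line proof overlooks it. Since $\chi_1(0)=1$ and $\chi_1(5)=-1$, items 1 and 3 fail at both $a\equiv 0$ and $a\equiv 5$; at these points one actually has $\chi_1(-a)=\chi_1(a)$ and $\chi_1(5-a)=-\chi_1(a)$. The other five items hold on all of $\ResT$.

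Two small corrections to your write-up:
\begin{itemize}
\item Drop the ``symmetric convention'' option. At these two points $-a=a$, so item 1 would force $\chi_1(a)=0$. The only correct version of items 1 and 3 is the restriction $a\not\equiv 0,5$.
\item The map $x\mapsto 7x$ is an additive automorphism of $\ResT$, not a ring automorphism. Additivity is all you use, so nothing breaks.
\end{itemize}

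Your proposed audit of later uses is worth carrying out, because it finds something. Case 3.1 of Lemma~\ref{lem:Rs_normal} applies item 1 to $a-i-2$, which is $\equiv 0$ or $5$ exactly when $a-i\equiv 2$ or $7$. For $a\equiv 3$ or $8$, the index $i=1$ is the only one with $\chi_2(a-i)\neq 0$. At that index the asserted equality $\chi_1(b-(3-i))=\chi_2(a-i)$ is false: for example, $a\equiv 3$ gives $\chi_1(\overline{0})=1$ but $\chi_2(\overline{2})=-1$.

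The lemma's conclusion still holds, but only through the other reply. That reply yields $[X\overline{1}]\oplus[\overline{1}]$ when $a\equiv 3$, or $[X\overline{6}]\oplus[\overline{6}]$ when $a\equiv 8$. Their first components lie in the exceptional classes $R^{*}_{-1}$ and $R^{*}_{1}$ respectively, not in the classes the paper's argument uses.
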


\begin{proof}
    By the symmetric properties of our characteristic and notice that $\MT{5}/3=\MT{5}$.
\end{proof}

We will use notation $[w] \mapsto [v]$ to denote game position $[v]$ is a successor of $[w]$, i.e. $[v]$ is reachable from $[w]$ by one legal move.

\section{Reducing strategy and single state} \label{sec:red}

\begin{lemma} \label{redrule}
Given any $w,v$ as subpositions,
the following equations hold:

\begin{enumerate}
    \item $[X1X w]=[X w]$
    \item $[w X1X]=[w X]$
    \item $[w X1X v]=[wX ] \oplus [X v]$
    \item $[XX1 w]=[w]$
    \item $[w 1XX]=[w]$
    \item $[w1XX1 v]=[w]\oplus[v]$
\end{enumerate}
\end{lemma}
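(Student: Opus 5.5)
The plan is to prove each identity by exhibiting an isomorphism of game trees, or a decomposition into a disjunctive sum, between the positions on the two sides. Equality of games then holds in the strong sense, which implies equality of Sprague--Grundy values. The key observation is that in each left-hand configuration some empty cells are \emph{permanently dead}: filling them would create three in a row, now or later. Such cells never admit a legal move. Likewise, certain occupied cells only ever matter through the constraint they impose on their immediate neighbours.

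First, for $[X1X w]$: the single empty cell between the two $X$'s is illegal forever, since filling it makes $XXX$. So the only legal moves are inside $w$. The constraint $w$ feels on its left is exactly ``the cell to my left is an $X$, and the cell two to my left is empty-but-dead, hence never an $X$''. I will check that legality of a move at a cell of $w$ depends only on the at most two cells on each side of it. A three-in-a-row containing the new cell lies within distance $2$. The relevant pattern $X\,\square\,w$, with the dead square counting as non-$X$ forever, therefore gives the same legality as $[X w]$, where the board edge plays the role of a permanent non-$X$. The map sending a position $[X1Xw']$ to $[Xw']$ is thus a bijection on move options commuting with moves, and induction on the number of empty cells gives $[X1Xw]=[Xw]$. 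Item (2) is the mirror image.

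For (3), in $[wX1Xv]$ the middle cell is again dead. Any three-in-a-row created by a move in $w$ lies in cells of $w$ together with the left $X$ and at most the dead cell. The dead cell is never an $X$, so the move's legality equals its legality in $[wX]$. Symmetrically, moves in $v$ behave as in $[Xv]$. Hence the game is the disjunctive sum, and the same inductive bijection argument, now on pairs of subpositions, gives $[wX]\oplus[Xv]$.

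For (4)--(6), the cell adjacent to $XX$ is dead, so $XX1$ acts as a permanent non-$X$ buffer of width at least one. For the cell at distance two from the $XX$ block, namely the first cell of $w$, a would-be triple would have to include the dead cell, which is impossible. So its legality is the same as next to a board edge. This gives $[XX1w]=[w]$, its mirror (5), and, arguing on both sides, the sum decomposition (6).

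The main obstacle is making the locality claim rigorous and uniform. One must show that legality of a move at cell $c$ is determined by the cells $c-2,\dots,c+2$. One must also show that a dead cell is dead for the rest of the game and is indistinguishable, for legality purposes, from a board boundary. Once this is stated cleanly as a lemma, all six items follow by the same induction on the number of empty cells.
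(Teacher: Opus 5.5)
Your proposal is correct and is essentially the paper's argument. The paper proves each identity by showing the sum of its two sides is $0$ with a Tweedledum--Tweedledee (copycat) strategy, and that strategy works precisely because of the move-for-move correspondence you establish from the locality of legality and the permanence of dead cells; you make the correspondence explicit as a game-tree isomorphism (a slightly stronger conclusion), while the paper leaves it implicit.
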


\begin{proof}
    For each case, a twedledum-tweedledee strategy is sufficient enough to show the sum of the two sides of the equation are $0$.
\end{proof}

We call a position single if it cannot be reduced by ~\eqref{redrule}.

\begin{proposition}
    A position is single if and only if it does not contain subsequence $XX$ or $X1X$.
\end{proposition}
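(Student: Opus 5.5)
The statement is a purely syntactic characterization, so I would prove it by matching the left-hand sides of the six rules in Lemma~\ref{redrule} against the forbidden patterns. Two conventions have to be fixed first. \emph{Single} means that no rule of Lemma~\ref{redrule} applies, where a rule applies if some way of writing the position in FEN-like notation matches the rule's left-hand side. In that matching, a gap of length $n\ge 1$ may be rewritten as $1$ followed by a gap of length $n-1$ (or the reverse), and the subpositions $w,v$ may be empty. \emph{Subsequence} means a contiguous block of cells. I would also use the observation that a legal position never contains $XXX$, since completing three in a row is forbidden.

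For ``only if'' I argue by contraposition: a position containing $XX$ or $X1X$ is not single.
\begin{itemize}
\item Suppose it contains $X1X$. Write the position as $w\,X1X\,v$. If the left neighbour of the block is the boundary, rule (1) applies. If the right neighbour is the boundary, rule (2) applies. Otherwise rule (3) applies.
\item Suppose it contains $XX$. Neither neighbour of the block is $X$, because $XXX$ is illegal. So each side of the block is either the boundary or a gap of length at least $1$. Peel off one empty cell from each adjacent gap, writing that gap as $1$ followed by the rest.
\begin{itemize}
\item If the block is at the left boundary and has a gap on its right, we get $[XX1w]$ and rule (4) applies.
\item The mirror case gives $[w1XX]$ and rule (5) applies.
\item If there are gaps on both sides, we get $[w1XX1v]$ and rule (6) applies.
\end{itemize}
\item The degenerate board $[XX]$, with no gap on either side, is the one case not matched by a rule. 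I would treat it by noting that it has no moves, so it equals the empty position $[\,]$. Alternatively, I would allow it by the convention that rules (4)/(5) with $w$ empty absorb it. This is the point where the convention must be stated explicitly.
\end{itemize}

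For ``if'', suppose the position contains neither $XX$ nor $X1X$. The left-hand side of each rule in Lemma~\ref{redrule} literally contains $X1X$ (rules 1--3) or $XX$ (rules 4--6) as a contiguous block. Rewriting gaps as $1$ plus a remainder does not change which cells are occupied, so it cannot create such a block. Hence no rule's left-hand side can match the position, and the position is single.

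The only real obstacle is notational rather than mathematical. One must pin down exactly what it means for a rule to ``apply'', in particular that gaps may be split and that $w,v$ may be empty. One must also handle the boundary-adjacent cases and the degenerate board $[XX]$ carefully. Once these conventions are fixed, both directions are immediate pattern checks.
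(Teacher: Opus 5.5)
Your proposal is correct, and it is the direct check of the left-hand sides of Lemma~\ref{redrule} that the paper leaves implicit; the proposition is stated there without proof, as an immediate reformulation of the definition of single. You are also right that $[XX]$ is a genuine exception to the literal statement, since it contains $XX$ but matches no rule. Your second fix does not work, because rules (4) and (5) with $w$ empty still need the extra empty cell ($[XX1]$, $[1XX]$); so one must either exclude $[XX]$ or add the trivial reduction $[XX]=[\,]$ explicitly.
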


The following properties will also be useful to simplify the position, but we will not consider them as reducible:

\begin{lemma}
    Given any $w,v$ as subpositions, the following equations hold:
    \begin{enumerate}
        \item $[X2X w]=[2X w]$
        \item $[w X2X]=[w X2]$
        \item $[w X4X v]=[w X2] \oplus [2X v]$
    \end{enumerate}
\end{lemma}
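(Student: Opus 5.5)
The three identities all follow the pattern of Lemma~\ref{redrule}. Writing $G=H$ for two positions means $G\oplus H$ is a $\mathscr{P}$-position. So for each identity I will show that the second player wins the sum of the two sides by mirroring (Tweedledum--Tweedledee). The key observation is local. In a gap of length $2$ flanked by pieces, $X2X$, placing a piece in either empty cell creates $XXX$, so neither cell is ever playable. Hence in $[X2Xw]$ the two cells between the first two $X$'s are permanently dead, and the leftmost $X$ does nothing except make those cells dead.

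For (1), I compare $[X2Xw]$ with $[2Xw]$. In $[2Xw]$, the first two cells (call them $a,b$, with $b$ adjacent to the $X$) are not automatically dead: placing at $b$ is legal, as is placing at $a$. So the equality is not a pure identity of move sets, and I argue as follows.

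\emph{Cells inside $w$.} The legality of a move in $w$ depends only on the cells within distance $2$. The relevant neighbourhood is $2Xw$ in both games, provided $a,b$ stay empty. Mirroring moves in $w$ therefore keeps the two components identical on $w$.

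\emph{Cells $a,b$ in $[2Xw]$.} These are the moves with no counterpart in $[X2Xw]$. I check them directly:
\begin{itemize}
\item Playing $b$ gives $[1XXw]$. By Lemma~\ref{redrule}(4) this equals $[w']$, where $w'$ is $w$ with its first cell deleted, since that cell is now dead next to $XX$.
\item Playing $a$ gives $[X1Xw]$, which equals $[Xw]$ by Lemma~\ref{redrule}(1).
\end{itemize}
Any response must restore a $\mathscr{P}$-position of the sum, so I would instead prove (1) by showing that $[X2Xw]$ and $[2Xw]$ have the same set of option values, by induction on the number of empty cells. The induction hypothesis lets me replace each option of one side by an equal option of the other. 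The one subtle case is the extra options $[Xw]$ (after $a$) and $[w']$ (after $b$) of $[2Xw]$. For these I need a reply in $[Xw]$ or $[w']$ that brings the sum back to $0$.

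I expect exactly this step to be the main obstacle: moves at $a,b$ change the boundary condition seen by $w$. My first approach is as follows. In the sum $[X2Xw]\oplus[2Xw]$, answer a move at $a$ with a move at $b$, and vice versa. The board then becomes $[X1XXw]$ versus $[XX\,?\,Xw]$-type shapes. After applying Lemma~\ref{redrule}, both should reduce to positions whose influence on $w$ is the same. In each case $w$ sees a piece at distance $1$ or $2$, or its first cell is dead. I would verify by a short case check on the first two cells of $w$ that the resulting boundaries agree. If they do not agree literally, I would fall back on the induction on option values.

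(2) is the mirror image of (1). For (3), the cells of the $4$-gap next to each $X$ are not dead, but any piece placed in the middle region interacts with only one side. I would treat the gap as $X2\,|\,2X$. Reading $[wX4Xv]$ as $[wX2]\oplus[2Xv]$ is then legitimate because the legality of a move in the left two cells depends only on $wX$ and the left two cells. The exception is placing both middle cells, which would create $XX$ in the middle. If that occurs, the third cell to either side is an $X$ at distance $2$, and no triple forms. Checking that no triple can straddle the midpoint reduces to a finite case check, which completes (3) given (1) and (2).
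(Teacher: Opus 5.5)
Your argument for (1) rests on a false observation: in a gap $X2X$ neither empty cell is dead. Filling the left cell gives $XX1X$, filling the right cell gives $X1XX$, and neither contains three in a row. What is true is that once one of the two cells is filled, the other becomes forbidden.

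Because of this error you think $[2Xw]$ has extra options at $a,b$ with no counterpart in $[X2Xw]$. You leave their treatment as ``the main obstacle'', and the reply you propose (answer $a$ by $b$) does not work. After $a$ in $[2Xw]$ and $b$ in $[X2Xw]$, the sum is $[Xw]\oplus[X1XXw]=[Xw]\oplus[w']$, which is not $0$ in general: for $w=1$ it is $[X1]$ plus the empty board, of value $1$. Under your own premise that reply $b$ would not even be legal. Worse, if your premise were right the identity itself would fail: for empty $w$, $[X2X]$ would be terminal (value $0$) while $[2X]$ has value $1$. The fallback ``induction on option values'' is never carried out, so (1), and hence (2), remain unproved.

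The missing idea is to compare the constraints cell by cell. The only triple involving the leftmost $X$ of $[X2Xw]$ is $(X,a,b)$. It can be completed only if $a$ and $b$ are both filled, and that would already complete $(a,b,X)$ with the right-hand $X$. So the leftmost $X$ forbids nothing that is not already forbidden, and every cell has the same legality condition in $[X2Xw]$ and $[2Xw]$:
\begin{itemize}
\item $a$ is legal iff $b$ is empty;
\item $b$ is legal iff $a$ and the first cell of $w$ are empty;
\item the cells of $w$ see identical neighbourhoods.
\end{itemize}
The two games are therefore isomorphic, and copying each move into the same cell of the other component wins the sum. This is exactly the paper's remark that the other point becomes forbidden and the leftmost $X$ plays no role.

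The same mechanism gives (3) directly, independently of (1) and (2). Write the gap as $c_1c_2c_3c_4$. The cells $c_1,c_2$ can never both be filled because of the left $X$, and $c_3,c_4$ can never both be filled because of the right $X$. Hence the straddling triples $(c_1,c_2,c_3)$ and $(c_2,c_3,c_4)$ can never be completed, and legality splits into the two halves. Your sketch of (3) is heading this way, but this is the precise reason. The configuration you single out as an exception, with both middle cells filled ($X1XX1X$), is legal and causes no trouble.
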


\begin{proof}
    For (1), no matter which point in $2$ the move is made on first, the another point will become a forbidden point, and the leftmost $X$ will not play any role. Similarly for (2).
    And for (3), simply split the points $4$ in the middle, and then apply the same principle to both sides.
\end{proof}

% \begin{definition}
%     We call the following board states regular:
%     \begin{enumerate}
%         \item Group $R_0$ includes the following 3 types of board states: %Am I right? Is it 35 or 3?
%         \begin{itemize}
%             \item $R_0^{(0)}$, including board state of form $[a]$ where $\chi_1(a)=1$.
%             \item $R_0^{(1)}$, including board state of form $[aX b]$ where $\chi_2(a)\chi_2(b)=1$.
%             \item $R_0^*$, including $[X \MT{6}]$,$[\MT{6}X]$ and $[X\MT{4}X]$. 
%         \end{itemize}
%         \item Group $R_1$ includes the following 3 types of board states:
%         \begin{itemize}
%             \item $R_1^{(0)}$, including board state of form $[a]$ where $\chi_1(a)=-1$.
%             \item $R_1^{(1)}$, including board state of form $[aX b]$ where $\chi_2(a)\chi_2(b)=-1$.
%             \item $R_1^*$, including $[X \MT{1}]$,$[\MT{1}X]$,$[X\MT{9}X]$ and\\
%             four sporadic board states: $[1X 1]$,$[1X 4]$,$[4X 1]$,$[4X 4]$. 
%         \end{itemize}
%     \end{enumerate}
% \end{definition}

% We will use a ``reductive move'' strategy to proof the following result:

% \begin{proposition}
%     The positions in group $R_0$ has SG value 0 and the positions in group $R_1$ has SG value 1.
% \end{proposition}

By making a move close to (with distance $1$ or $2$) existing pieces, we can break the board to shorter cases:

$$[w1X5 v] \mapsto [w1XX4 v], [w1X1X3 v]$$

and the latter two positions are equal to $[w]\oplus[3v]$ and $[w1X]\oplus[X3 v]$ respectively, according to ~\eqref{redrule}.

% According to this strategy, we show both $r_0$ and $r_1+*$ are $\mathscr{P}$-positions for any $r_0\in R_0$ and $r_1\in R_1$ 
% by giving a precise winning strategy for the second player.

Our approach to proving the main theorem is as follows: we present an action strategy for ``regular positions'' (we will provide a specific definition later). Regardless of how the previous player make the move, the next player always has at least one move that can maintain the regularity of the board by utilizing the aforementioned reductions. In this way, we can recursively prove the Sprague-Grundy value for all regular positions.

While some degree of case analysis is inevitable, 
our approach efficiently restricts it to a manageable scale, 
allowing us to replace an extensive computer search with a compact hand-checked proof.

\section{Main Theorem}
    
To proceed with the proof of the main theorem, we first establish the following algebraic lemma regarding the characteristic function $\chi_1$:

\begin{lemma}\label{lem:alpha}
For any $a,b \in \mathbb{Z}_{10}$, at least one of the following two equalities holds:
\begin{align*}
    \chi_1(a)\chi_1(b) &= \chi_1(a+b+4), \\
    \chi_1(a+1)\chi_1(b-1) &= \chi_1(a+b+4).
\end{align*}
\end{lemma}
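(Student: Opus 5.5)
The plan is to turn the statement into one about a simple sign function on $\ResT$, shrink it with a half-turn symmetry, and finish with a very small case check. Since $x\mapsto x/3=7x$ is a bijection of $\ResT$, I will put $u=a/3=7a$ and $v=b/3=7b$. I will also write $s(t)=1$ if $t\in\{\MT{0},\dots,\MT{4}\}$ and $s(t)=-1$ otherwise, so that $\chi_1(a)=s(7a)$.

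First I will translate every argument occurring in the lemma:
\begin{itemize}
\item $7(a+b+4)=u+v+28\equiv u+v-2$;
\item $7(a+1)=u+7$;
\item $7(b-1)=v-7\equiv v+3$.
\end{itemize}
The function $s$ satisfies $s(t+5)=-s(t)$; this is the identity $\chi_1(5+a)=-\chi_1(a)$ of the earlier proposition, since $\MT{5}/3=\MT{5}$. Hence $s(u+7)=-s(u+2)$ and $s(v+3)=-s(v-2)$. The lemma is therefore equivalent to the following: for all $u,v\in\ResT$, at least one of
\[ s(u)s(v)=s(u+v-2) \quad\text{or}\quad s(u+2)s(v-2)=s(u+v-2) \]
holds.

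Next I will reduce by symmetry. Replacing $u$ by $u+5$ multiplies both sides of each of the two equations by $-1$, using $s(t+5)=-s(t)$ again, so whether each equation holds is unchanged. The same is true for $v$. So it suffices to treat $u,v\in\{0,1,2,3,4\}$, where $s(u)=s(v)=1$. In this range the first equation reads $s(u+v-2)=1$. Since $0\le u+v\le 8$, it holds exactly when $u+v\in\{2,\dots,6\}$.

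That leaves only the pairs with $u+v\in\{0,1,7,8\}$, namely $(0,0)$, $(0,1)$, $(1,0)$, $(3,4)$, $(4,3)$ and $(4,4)$. For each of these I will verify the second equation directly. For example, for $(0,0)$ we get $s(2)s(8)=-1=s(8)$, and for $(4,4)$ we get $s(6)s(2)=-1=s(6)$; the other four cases are equally immediate.

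The only real care point is getting the change of variables right, especially the constant $28\equiv 8\equiv -2$ and the shifts by $7$ and $3$. After that, the $\MT{5}$-antisymmetry does the heavy lifting, and the remaining check is six one-line evaluations of $s$. Alternatively, one could simply tabulate all $100$ pairs $(a,b)$, but the reduction above keeps the proof hand-checkable in the spirit of the paper.
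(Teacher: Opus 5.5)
Your proof is correct and takes essentially the same route as the paper. Both arguments conjugate by $x\mapsto 7x$ and use the antisymmetry under a shift by $5$ to put both arguments in the ``positive'' window. Both then note that the first identity fails only when the sum overflows that window, and verify the second identity in exactly those cases. The only difference is cosmetic: the paper centers the window at $\{-2,\dots,2\}$ via $\sigma(7x-2)$ and settles the overflow with a two-subcase sign argument, while you check the same six overflow pairs by hand.
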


\begin{proof}
    While the following detailed algebraic proof is possible, we present a more
    intuitive argument based on the structure shown in Figure~\ref{fig:lemma}.

Recall that the map $x \mapsto 7x$ acts as the inverse of $x \mapsto x/3$ in $\mathbb{Z}_{10}$. 
Let $I = \{-2, -1, 0, 1, 2\}$ be a symmetric set of integer representatives modulo $10$. For any $x \in \mathbb{Z}_{10}$, we can uniquely decompose $7x - 2 \pmod{10}$ as a sum $c_x + d_x \pmod{10}$, where $c_x \in \{0, 5\}$ and $d_x \in I$. 

By the definition of $\chi_1$, we have $\chi_1(x) = 1$ if and only if $7x \in \{0, 1, 2, 3, 4\} \pmod{10}$, which is strictly equivalent to $7x - 2 \pmod{10} \in I$. Consequently, $\chi_1(x) = 1$ when $c_x = 0$, and $\chi_1(x) = -1$ when $c_x = 5$. 
To express this compactly, we define a sign function $\sigma: \mathbb{Z}_{10} \to \{1, -1\}$ such that $\sigma(y) = 1$ if $y \pmod{10} \in I$, and $\sigma(y) = -1$ if $y \pmod{10} \in I + 5$. It follows immediately that $\chi_1(x) = \sigma(7x - 2)$ and $\sigma(y_1 + y_2) = \sigma(y_1)\sigma(y_2)\sigma(d_1 + d_2)$.

Now, consider the term $a+b+4$:
\begin{align*}
    \chi_1(a+b+4) &= \sigma(7(a+b+4) - 2) \\
    &= \sigma(7a + 7b + 26) \\
    &= \sigma((7a - 2) + (7b - 2)).
\end{align*}
Applying our decomposition yields:
\begin{align*}
    \chi_1(a+b+4) &= \sigma(c_a + d_a + c_b + d_b) \\
    &= \sigma(c_a)\sigma(c_b)\sigma(d_a + d_b) \\
    &= \chi_1(a)\chi_1(b)\sigma(d_a + d_b).
\end{align*}

We now evaluate the integer sum $d_a + d_b$ over $\mathbb{Z}$. Since $d_a, d_b \in I$, their sum lies in the interval $[-4, 4]$. 
If $d_a + d_b \in I$, then $\sigma(d_a + d_b) = 1$, which directly implies $\chi_1(a+b+4) = \chi_1(a)\chi_1(b)$, satisfying the first equality.

Otherwise, we must have $d_a + d_b \notin I$. Given that $d_a, d_b \in I$, this exclusion forces $d_a$ and $d_b$ to be non-zero and share the same sign. Specifically, either $d_a, d_b \in \{1, 2\}$ or $d_a, d_b \in \{-1, -2\}$. In both configurations, $d_a + d_b \pmod{10} \in I + 5$, meaning $\sigma(d_a + d_b) = -1$. Consequently:
\begin{equation} \label{eq:chi_neg}
    \chi_1(a+b+4) = -\chi_1(a)\chi_1(b).
\end{equation}
In this case, we turn our attention to the shifted parameters $a+1$ and $b-1$:
\begin{align*}
    \chi_1(a+1) &= \sigma(7(a+1) - 2) = \sigma((7a - 2) - 3) = \chi_1(a)\sigma(d_a - 3), \\
    \chi_1(b-1) &= \sigma(7(b-1) - 2) = \sigma((7b - 2) + 3) = \chi_1(b)\sigma(d_b + 3).
\end{align*}
To satisfy the second equality, it suffices to show that $\sigma(d_a - 3)\sigma(d_b + 3) = -1$. We verify this for the two subcases of $d_a, d_b$:
\begin{itemize}
    \item If $d_a, d_b \in \{1, 2\}$, then $d_a - 3 \in \{-2, -1\} \subset I$, yielding $\sigma(d_a - 3) = 1$. Meanwhile, $d_b + 3 \in \{4, 5\}$, which reduces modulo $10$ to $\{-6, -5\} \subset I + 5$, yielding $\sigma(d_b + 3) = -1$.
    \item If $d_a, d_b \in \{-1, -2\}$, then $d_a - 3 \in \{-4, -5\} \subset I + 5$, yielding $\sigma(d_a - 3) = -1$. Meanwhile, $d_b + 3 \in \{2, 1\} \subset I$, yielding $\sigma(d_b + 3) = 1$.
\end{itemize}
In either subcase, $\sigma(d_a - 3)\sigma(d_b + 3) = -1$. Multiplying the shifted terms gives:
\begin{equation*}
    \chi_1(a+1)\chi_1(b-1) = \chi_1(a)\chi_1(b)(-1) = \chi_1(a+b+4),
\end{equation*}
where the last step follows from equation (\ref{eq:chi_neg}). This confirms that the second equality holds, completing the proof.
\end{proof}

\begin{figure}[htbp]

    \centering
    \begin{tikzpicture}[scale=0.7]

        \foreach \x/\y in {
            0/1, 2/1, 3/1, 6/1, 9/1, 
            0/4, 2/4, 3/4, 6/4, 9/4, 
            0/5, 2/5, 3/5, 6/5, 9/5, 
            0/7, 2/7, 3/7, 6/7, 9/7, 
            0/8, 2/8, 3/8, 6/8, 9/8, 
            1/0, 4/0, 5/0, 7/0, 8/0, 
            1/2, 4/2, 5/2, 7/2, 8/2, 
            1/3, 4/3, 5/3, 7/3, 8/3, 
            1/6, 4/6, 5/6, 7/6, 8/6, 
            1/9, 4/9, 5/9, 7/9, 8/9
        } {
            \fill[gray!50] (\x,\y)--(\x,\y+1)--(\x+1,\y+1)--(\x+1,\y)--cycle;
        }
        
        \foreach \x in {1, 4, 5, 7, 8}{
            \fill[gray!50] (\x,-0.5)--(\x,0)--(\x+1,0)--(\x+1,-0.5)--cycle;
            \fill[gray!50] (\x,10.5)--(\x,10)--(\x+1,10)--(\x+1,10.5)--cycle;
            \fill[gray!50] (-0.5,\x)--(0,\x)--(0,\x+1)--(-0.5,\x+1)--cycle;
            \fill[gray!50] (10.5,\x)--(10,\x)--(10,\x+1)--(10.5,\x+1)--cycle;
        }

        \foreach \x in {0, 1, 2, 3, 4, 5, 6, 7, 8, 9}{
            \node at (\x+0.5,-0.75) {\large \x};
            \node at (-0.75,\x+0.5) {\large \x};
        }

        \draw (-0.5, -0.5) grid (10.5, 10.5);
        \draw[thick] (0,0)--(0,10)--(10,10)--(10,0)--cycle;

        \foreach \x/\y in {
            6/0, 8/0, 9/0, 2/0, 5/0, 
            7/9, 9/9, 0/9, 3/9, 6/9, 
            8/8, 0/8, 1/8, 4/8, 7/8,
            9/7, 1/7, 2/7, 5/7, 8/7,
            0/6, 2/6, 3/6, 6/6, 9/6,
            1/5, 3/5, 4/5, 7/5, 0/5,
            2/4, 4/4, 5/4, 8/4, 1/4,
            3/3, 5/3, 6/3, 9/3, 2/3,
            4/2, 6/2, 7/2, 0/2, 3/2,
            5/1, 7/1, 8/1, 1/1, 4/1
        } {
            \filldraw[draw=black,fill=white] (\x+0.5,\y+0.5) circle (0.25);
        }
        
        \foreach \x/\y in {
            6/5, 8/5, 9/5, 2/5, 5/5, 
            7/4, 9/4, 0/4, 3/4, 6/4, 
            8/3, 0/3, 1/3, 4/3, 7/3, 
            9/2, 1/2, 2/2, 5/2, 8/2,
            0/1, 2/1, 3/1, 6/1, 9/1,
            1/0, 3/0, 4/0, 7/0, 0/0,
            2/9, 4/9, 5/9, 8/9, 1/9,
            3/8, 5/8, 6/8, 9/8, 2/8,
            4/7, 6/7, 7/7, 0/7, 3/7,
            5/6, 7/6, 8/6, 1/6, 4/6
        } {
            \filldraw[draw=black,fill=gray] (\x+0.5,\y+0.5) circle (0.25);
        }
        
        \filldraw[draw=black,fill=white] (11.4,8.9)--(11.4,9.6)--(12.1,9.6)--(12.1,8.9)--cycle; 
        \node at (12,9.25)[right] {\large $\chi_1(a)\chi_1(b)=1$};
        
        \filldraw[draw=black,fill=gray!50] (11.4,7.9)--(11.4,8.6)--(12.1,8.6)--(12.1,7.9)--cycle; 
        \node at (12,8.25) [right] {\large $\chi_1(a)\chi_1(b)=-1$};
        
        \filldraw[draw=black,fill=white] (11.75,7.25) circle (0.25); 
        \node at (12,7.25) [right] {\large $\chi_1(a+b+4)=1$};
        
        \filldraw[draw=black,fill=gray] (11.75,6.25) circle (0.25); 
        \node at (12,6.25) [right] {\large $\chi_1(a+b+4)=-1$};
        
    \end{tikzpicture}

    \caption{This toroidal chessboard (with opposite edges glued together) shows the values of $\chi_1(a)\chi_2(b)$ and $\chi_1(a+b+4)$.
        The lemma can be quickly proven through the following observation: each piece is either the same color as the square it is on (such that $\chi_1(a)\chi_1(b)=\chi_1(a+b+4)=1$), or the same color as the square at its lower right corner (such that $\chi_1(a+1)\chi_1(b-1)=\chi_1(a+b+4)=1$).
    }
    \label{fig:lemma}
\end{figure}
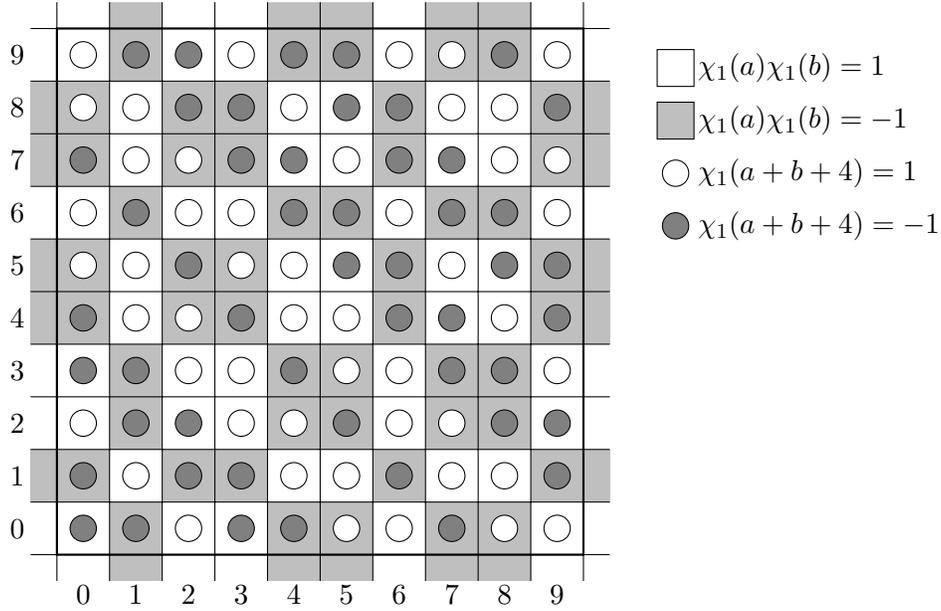
    
We now formally define the set of \textbf{regular positions}, denoted by $R$, which is partitioned into two disjoint subsets, $R_{1}$ and $R_{-1}$. The subscripts $1$ and $-1$ are deliberately chosen to align with the codomains of our characteristic functions, which heavily streamlines the algebraic proofs that follow.

\begin{definition} \label{def:regular_states}
The set of regular positions $R = R_{1} \cup R_{-1}$ is classified as follows:

\begin{enumerate}
    \item \textbf{Class $R_{1}$}: This class comprises three sub-families of states. Our subsequent analysis will demonstrate that all states in this class have an SG value of $0$ (i.e., they are $\mathscr{P}$-positions).
    \begin{itemize}
        \item $R_{1}^{(0)}$ (Empty boards): States of the form $[a]$, where $\chi_1(a) = 1$.
        \item $R_{1}^{(1)}$ (General single-piece boards): States of the form $[a X b]$, where $\chi_2(a)\chi_2(b) = 1$.
        \item $R_{1}^{*}$ (Exceptional boundary states): States belonging to the three specific residue classes $[X \bar{6}]$, $[\bar{6}X]$, and $[X \bar{4} X]$.
    \end{itemize}

    \item \textbf{Class $R_{-1}$}: This class comprises three counterpart sub-families. We will show that all states in this class have an SG value of $1$ (a specific subset of $\mathscr{N}$-positions).
    \begin{itemize}
        \item $R_{-1}^{(0)}$ (Empty boards): States of the form $[a]$, where $\chi_1(a) = -1$.
        \item $R_{-1}^{(1)}$ (General single-piece boards): States of the form $[a X b]$, where $\chi_2(a)\chi_2(b) = -1$.
        \item $R_{-1}^{*}$ (Exceptional boundary states): States belonging to the three specific residue classes $[X \bar{1}]$, $[\bar{1} X]$, and $[X \bar{9} X]$.
    \end{itemize}
\end{enumerate}
\end{definition}

It is structurally important to note that the definitions of these six sub-families are strictly invariant under spatial reflection (i.e., left-right reversal of the board). We will frequently exploit this bilateral symmetry to avoid redundant case analyses.

Furthermore, for the sake of notational and conceptual convenience in the forthcoming proofs, whenever a board-shortening reduction is applied (as per Lemma \ref{redrule}), the eliminated segment may be formally treated as a trivial game component with an SG value of $0$, effectively splitting the original state into the shorter active board and an empty board.
    
\subsection{Empty Board States $R_s^{(0)}$}

Let $[a] \in R_s^{(0)}$ be an empty board of length $a \ge 1$. A legal move by the Next Player places a single piece, transitioning the game to a state $[a_1 X a_2]$ such that $a_1 + a_2 = a - 1$. 
Depending on the geometric placement of the piece, the move falls strictly into one of three distinct categories:

\begin{enumerate}
    \item \textbf{Single-cell terminal board:} $a = 1$, yielding the terminal state $[X]$ with no remaining empty cells ($a_1 = a_2 = 0$).
    \item \textbf{Interior move:} The piece is placed such that neither adjacent cell is the board's strict boundary (i.e., $a_1 \ge 1$ and $a_2 \ge 1$).
    \item \textbf{Boundary move:} The piece is placed at the immediate edge of the board (i.e., exactly one of $a_1, a_2$ is $0$).
\end{enumerate}

To establish the SG values of these states, we analyze the replying strategies available to the subsequent player. The following two lemmas provide a concrete ``reduction move'' strategy for cases (2) and (3), ensuring the game can always be fractured into favorable independent components.

\begin{lemma}\label{lem:R0_normal}
    Suppose an empty board $[a] \in R_s^{(0)}$ transitions to $[a_1 X a_2]$ via an interior move ($a_1, a_2 \ge 1$). Then the responding player can always place a piece immediately adjacent to the newly placed one such that the reduced state $[v]$ decomposes into a disjunctive sum $[v_1] \oplus [v_2]$, where $[v_1] \in R_{s_1}^{(0)}$, $[v_2] \in R_{s_2}^{(0)}$, and $s_1 s_2 = s$.
\end{lemma}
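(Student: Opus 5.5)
The plan is to show that the two candidate replies adjacent to the new piece, on its right or on its left, produce exactly the two alternatives in Lemma~\ref{lem:alpha}. Write $a=a_1+a_2+1$ with $a_1,a_2\ge 1$, and let $s=\chi_1(a)$.

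\textbf{Reply on the right.} The responder plays immediately right of the new piece, giving $[a_1 XX (a_2-1)]$. Write the left gap as $(a_1-1)\,1$ and the right gap as $1\,(a_2-2)$. Rule (6) of Lemma~\ref{redrule} then gives
\[
[(a_1-1)1XX1(a_2-2)]=[a_1-1]\oplus[a_2-2],
\]
with rules (4)/(5) covering the boundary cases where a side gap has length $1$. Following the convention after Definition~\ref{def:regular_states}, a component of length $0$ is the trivial board. It has SG value $0$, and it belongs to $R^{(0)}_1$ because $\chi_1(0)=1$.

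\textbf{Reply on the left.} Symmetrically, playing immediately left of the new piece gives $[a_1-2]\oplus[a_2-1]$.

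\textbf{Applying Lemma~\ref{lem:alpha}.} The lengths in each option sum to $a_1+a_2-3=a-4$. Put $\alpha=a_1-2$ and $\beta=a_2-1$, so that $\alpha+\beta+4=a$. Lemma~\ref{lem:alpha} then says one of the following holds:
\[
\chi_1(a_1-2)\chi_1(a_2-1)=\chi_1(a)\quad\text{or}\quad\chi_1(a_1-1)\chi_1(a_2-2)=\chi_1(a).
\]
The first equality corresponds to the left reply and the second to the right reply. In either case the responder chooses that reply. Setting $s_1=\chi_1(|v_1|)$ and $s_2=\chi_1(|v_2|)$ gives $[v_i]\in R^{(0)}_{s_i}$ and $s_1s_2=s$, which is the statement.

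\textbf{Main obstacle: degenerate lengths.} Assuming nothing in the middle breaks, the hard step is the small gaps, where a formal length of $-1$ appears. This happens when $a_1=1$ for the left reply, or $a_2=1$ for the right reply.

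Take the left reply with $a_1=1$. The actual position is $[XX a_2]$, which rule (4) reduces to $[a_2-1]$, a single component. I would check that $\chi_1(-1)=1$: indeed $7\cdot(-1)\equiv 3$, so $-1/3=\MT{3}$. Hence treating the missing piece as a formal board $[-1]$ with $\chi_1=1$ is harmless. The product identity of Lemma~\ref{lem:alpha} then reads $\chi_1(a_2-1)=\chi_1(a)$, which is exactly what is needed for the single component $[a_2-1]$ (paired with a trivial component in $R^{(0)}_1$).

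The right reply with $a_2=1$ is handled the same way by reflection. The smallest case $a_1=a_2=1$ ($a=3$) should be checked by hand: both replies lead to a board that reduces to the trivial one, consistent with $\chi_1(3)=1$.

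I would finish by checking that every reply used is legal, i.e. creates no three-in-a-row. This is immediate, since before the reply the board holds only the one new piece.
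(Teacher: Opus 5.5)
Your proposal is correct and follows the paper's proof. It uses the same two replies adjacent to the new piece, the same decompositions $[a_1-2]\oplus[a_2-1]$ and $[a_1-1]\oplus[a_2-2]$ via Lemma~\ref{redrule}, and the same substitution $\alpha=a_1-2$, $\beta=a_2-1$ into Lemma~\ref{lem:alpha}. You also handle the degenerate case $a_1=1$ or $a_2=1$ explicitly via $\chi_1(-1)=1$, a point the paper's proof leaves implicit.
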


\begin{proof}
    By playing immediately adjacent to the new piece, the responding player has two candidate moves, yielding either $[(a_1-1) X X a_2]$ or $[a_1 X X (a_2-1)]$. By the reduction rules outlined in Lemma \ref{redrule}, these states naturally decompose as follows:
    \begin{align*}
        [(a_1-1) X X a_2] = [(a_1-2) 1 X X 1 (a_2-1)] &\cong [(a_1-2)] \oplus [(a_2-1)], \\
        [a_1 X X (a_2-1)] = [(a_1-1) 1 X X 1 (a_2-2)] &\cong [(a_1-1)] \oplus [(a_2-2)].
    \end{align*}
    By definition, it suffices to prove that at least one of these two configurations satisfies the target parity condition. Since $s = \chi_1(a) = \chi_1(a_1+a_2+1)$, we require either:
    \begin{align*}
        \chi_1(a_1-2)\chi_1(a_2-1) &= \chi_1(a_1+a_2+1), \quad \text{or} \\
        \chi_1(a_1-1)\chi_1(a_2-2) &= \chi_1(a_1+a_2+1).
    \end{align*}
    Letting $x = a_1-2$ and $y = a_2-1$, we observe that $x+y+4 = a_1+a_2+1$. The required condition then perfectly matches the algebraic identity proven in Lemma \ref{lem:alpha}, guaranteeing that a valid reduction move always exists.
\end{proof}

\begin{lemma}\label{lem:R0_border}
    Suppose an empty board $[a] \in R_s^{(0)}$ transitions to $[X (a-1)]$ via a boundary move (by symmetry, assume the left edge). Then the responding player can always place a piece at distance 1 or 2 from the newly placed piece such that the resulting state $[v]$ belongs to $R_s$.
\end{lemma}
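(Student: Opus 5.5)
The plan is to enumerate the two replies the statement allows and show that, for each residue of $a$ modulo $10$, at least one of them lands in $R_s$, where $s=\chi_1(a)$. After the boundary move the board is $[X(a-1)]$. A reply at distance $1$ gives $[XX(a-2)]$, and a reply at distance $2$ gives $[X1X(a-3)]$. By Lemma~\ref{redrule} these reduce as follows:
\begin{align*}
[XX(a-2)] = [XX1(a-3)] &= [(a-3)], \\
[X1X(a-3)] &= [X(a-3)].
\end{align*}
So the task is to show that either $[(a-3)]\in R_s^{(0)}$, or $[X(a-3)]$ is a regular state of sign $s$.

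First I use the distance-$1$ reply. It succeeds exactly when $\chi_1(a-3)=\chi_1(a)$. Dividing by $3$ in $\ResT$, we have $(a-3)/3 = a/3-1$. Shifting the argument of $\chi_1$ down by one step changes membership in $\{\MT{0},\dots,\MT{4}\}$ only when $a/3\in\{\MT{0},\MT{5}\}$, that is, when $a\equiv 0$ or $a\equiv 5 \pmod{10}$. So for every other residue the distance-$1$ reply $[(a-3)]$ lies in $R_s^{(0)}$ and we are done.

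For the two remaining residues I use the distance-$2$ reply. I read $[X(a-3)]$ as the single-piece board $[0X(a-3)]$ in the family $R^{(1)}$. Since $\chi_2(0)=1$, its sign is $\chi_2(a-3)$.
\begin{itemize}
\item If $a\equiv 0$, then $s=1$ and $a-3\equiv 7$. Here $7/3\equiv 9\equiv -1$, so $\chi_2(7)=1=s$.
\item If $a\equiv 5$, then $s=\chi_1(5)=-1$ (as $5/3=\MT{5}$) and $a-3\equiv 2$. Here $2/3\equiv 4$, so $\chi_2(2)=-1=s$.
\end{itemize}
Thus the distance-$2$ reply is regular of the right sign in both leftover cases. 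If Definition~\ref{def:regular_states} is meant to require both gaps of $[aXb]$ to be positive, I would instead check these two targets against the exceptional family $R^*$. Failing that, I would add the boundary states $[X\MT{7}]$ and $[X\MT{2}]$ to the induction by treating them as the $a_1=0$ instance of $R^{(1)}$. That instance is the natural reading, since the boundary behaves like a gap of length $0$.

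The main obstacle is the small boards, where $a-2$ or $a-3$ is $\le 0$, so the reductions of Lemma~\ref{redrule} do not literally apply. This means $a\le 4$, together with the smallest members of the classes $\MT{0}$ and $\MT{5}$, namely $a=5$ and $a=10$. For $a=5$ the distance-$2$ reply gives $[X1X2]=[X2]$, and I would verify directly that this has SG value $1$. For $a\le 4$, where $a=1$ is already excluded as the terminal case, I would check by hand that the claimed reply produces a terminal or evidently correct position. For instance, with $a=3$ the reply gives $[XX1]=[\,]$ of value $0$, and $\chi_1(3)=1$. Apart from these base cases, the argument is the one-step residue computation above.
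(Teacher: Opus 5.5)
Your proposal is correct and is essentially the paper's proof. It uses the same two replies, the same reductions via Lemma~\ref{redrule}, the same observation that $\chi_1(a-3)=\chi_1(a)$ fails only for $a\equiv 0,5 \pmod{10}$, and the same reading of $[X(a-3)]$ as $[0X(a-3)]\in R_s^{(1)}$ in those two classes.

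Your hedge about positive gaps is unnecessary: the paper allows gap length $0$ in $R^{(1)}$, e.g.\ in Lemma~\ref{lem:R0_reduce}. The fallback to $R^*$ would also have failed, since $[X\MT{7}]$ and $[X\MT{2}]$ are not exceptional classes. Your attention to small boards, which the paper passes over, is a harmless extra and checks out. For $a=2,3$ the distance-$1$ reply gives the terminal positions $[XX]$ and $[XX1]$ with $s=1$. For $a=4$ it gives $[XX2]=[1]$ with $s=-1$. The $a=5$ target $[X2]$ does have SG value $1$.
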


\begin{proof}
    The responding player has two candidate moves near the edge piece: placing a piece to form $[X X (a-2)]$ or $[X 1 X (a-3)]$. According to Lemma \ref{redrule}, these states simplify to:
    \begin{align*}
        [X X (a-2)] = [X X 1 (a-3)] &\cong [a-3], \\
        [X 1 X (a-3)] &\cong [X (a-3)].
    \end{align*}
    We must verify that for any $a \ge 2$, at least one of $[a-3]$ or $[X (a-3)]$ belongs to $R_s$.
    If $\chi_1(a-3) = \chi_1(a) = s$, then the first option trivially yields $[a-3] \in R_s^{(0)}$. Direct evaluation of the characteristic function $\chi_1$ reveals that $\chi_1(a-3) = \chi_1(a)$ holds for all residue classes modulo $10$, except when $a \equiv 0$ and $a \equiv 5 \pmod{10}$. 
    
    For these two exceptional classes, we evaluate the second option, $[X (a-3)]$, which formally writes as $[0 X (a-3)]$:
    \begin{itemize}
        \item If $a \equiv 0 \pmod{10}$, then $s = \chi_1(0) = 1$. The second option yields $[0 X 7]$. Since $\chi_2(0) = 1$ and $\chi_2(7) = 1$, we have $\chi_2(0)\chi_2(7) = 1 = s$, ensuring $[0 X 7] \in R_1^{(1)} \subset R_1$.
        \item If $a \equiv 5 \pmod{10}$, then $s = \chi_1(5) = -1$. The second option yields $[0 X 2]$. Since $\chi_2(0) = 1$ and $\chi_2(2) = -1$, we have $\chi_2(0)\chi_2(2) = -1 = s$, ensuring $[0 X 2] \in R_{-1}^{(1)} \subset R_{-1}$.
    \end{itemize}
    Consequently, in all possible cases, at least one replying move effectively restricts the reduced game state to $R_s$.
\end{proof}

Furthermore, to establish the winning strategy for the appropriate parity, we prove the existence of an outbound move:

\begin{lemma}\label{lem:R0_reduce}
    For any empty board $[a] \in R_s^{(0)}$, there exists at least one legal move for the Next Player that transitions the game directly to a state in $R_{-s}^{(1)} \cup R_{-s}^*$.
\end{lemma}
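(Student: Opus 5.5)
The plan is to reduce Lemma~\ref{lem:R0_reduce} to a finite residue computation modulo $10$, plus a check of the small boards where the residues are not yet realized by nonnegative lengths. From $[a]$ the Next Player may place a single piece anywhere, reaching $[a_1 X a_2]$ for any $a_1,a_2\ge 0$ with $a_1+a_2=a-1$. Such a move is always legal, since one piece cannot complete a three-in-a-row. It therefore suffices to exhibit such a split with $\chi_2(a_1)\chi_2(a_2)=-s$, where $s=\chi_1(a)$, or a boundary split landing in one of the exceptional classes $[X\MT{6}]$ or $[X\MT{1}]$. A boundary move is covered by the formal convention $[X(a-1)]=[0X(a-1)]$ used in Lemma~\ref{lem:R0_border}.

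First tabulate the characteristic functions on $\ResT$, using $x/3=7x$.
\begin{itemize}
\item $\chi_1(a)=1$ exactly for $a\in\{\MT{0},\MT{2},\MT{3},\MT{6},\MT{9}\}$.
\item $\chi_2(a)=1$ for $a\in\{\MT{0},\MT{3},\MT{7}\}$, $\chi_2(a)=-1$ for $a\in\{\MT{2},\MT{5},\MT{8}\}$, and $\chi_2(a)=0$ otherwise.
\end{itemize}

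Next compute which residues $a_1+a_2$ can take with a prescribed sign of $\chi_2(a_1)\chi_2(a_2)$.
\begin{itemize}
\item Product $+1$: both summands in $\{\MT{0},\MT{3},\MT{7}\}$ or both in $\{\MT{2},\MT{5},\MT{8}\}$. The attainable sums are $\{\MT{0},\MT{3},\MT{4},\MT{6},\MT{7}\}$.
\item Product $-1$: one summand from each set. The attainable sums are $\{\MT{1},\MT{2},\MT{5},\MT{8},\MT{9}\}$.
\end{itemize}

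Now compare with the required sum $a-1$.
\begin{itemize}
\item If $s=-1$, then $a\in\{\MT{1},\MT{4},\MT{5},\MT{7},\MT{8}\}$, so $a-1\in\{\MT{0},\MT{3},\MT{4},\MT{6},\MT{7}\}$. This is exactly the product-$(+1)$ set, as needed.
\item If $s=1$, then $a-1\in\{\MT{9},\MT{1},\MT{2},\MT{5},\MT{8}\}$. This is exactly the product-$(-1)$ set.
\end{itemize}
So at the level of residues a suitable split always exists, and we target $R^{(1)}_{-s}$.

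The remaining issue, and the main obstacle, is realizing these residues by actual nonnegative integers $a_1,a_2$ summing to $a-1$ when $a$ is small. I would fix explicit representatives using the least nonnegative representatives of $\{0,3,7\}$ and $\{2,5,8\}$. A good uniform choice is often $a_1=0$ (a boundary move, with $\chi_2(0)=1$), which works whenever $\chi_2(a-1)=-s$. For instance, $a\equiv 3,4,6$ give $[0X\MT{2}]$, $[0X\MT{3}]$ and $[0X\MT{5}]$ respectively. When the boundary move fails, take a small interior split: $a\equiv 5$ uses $[2X(a-3)]$, and $a\equiv 7$ uses $[3X(a-4)]$ or alternatively the exceptional $[X\MT{6}]\in R^*_{-1}$.

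Going through the ten residues gives an explicit split valid once $a$ exceeds a small bound (around $10$). The finitely many short boards are then checked by hand.
\begin{itemize}
\item $a=1$: the move gives $[X]=[0X0]$, with product $1=-s$.
\item $a=2$: the move gives $[X1]$, which lies in $R^*_{-1}$ as required.
\item $a=5$: the move gives $[2X2]$.
\end{itemize}
This small-case bookkeeping, rather than the residue argument, is where errors are most likely. So I would present the argument as a short table listing, for each residue of $a$, the chosen move and the least $a$ for which it is available, together with the direct verification for the few smaller boards.
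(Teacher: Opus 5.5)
Your proposal is correct and is essentially the paper's proof. The paper also lists one explicit move per residue class of $a$ modulo $10$, for example $[\MT{0}]\mapsto[\MT{2}X\MT{7}]$, $[\MT{2}]\mapsto[X\MT{1}]\in R^*_{-1}$ and $[\MT{7}]\mapsto[X\MT{6}]\in R^*_{1}$. Its representatives are minimal, so each move already exists on the shortest board in its class and no separate small-case check is needed.

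Your sum-set computation is a useful addition: it shows that $R^{(1)}_{-s}$ alone works at the residue level, and that the exceptional classes are genuinely needed only at $a=2$, since the only $R^{(1)}_{-1}$ splits for $a\equiv 2$ are $3+8$ and $8+3$. When writing it up, give the moves for $a\equiv 0$ (e.g.\ $[2X(a-3)]$) and $a\equiv 2$ (e.g.\ $[X(a-1)]$) explicitly. Also note that $[X\MT{6}]$ lies in $R^*_{1}$, which is the required target $R^*_{-s}$ when $a\equiv 7$.
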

\begin{proof}
    This is proven constructively by enumerating a valid move for each of the $10$ residue classes modulo $10$.
    When $s=1$ (i.e., $a \in \{\bar{0}, \bar{2}, \bar{3}, \bar{6}, \bar{9}\}$), the corresponding valid moves are:
    \begin{align*}
        [\bar{0}] &\mapsto [\bar{2} X \bar{7}] \in R_{-1}^{(1)}, \\
        [\bar{3}] &\mapsto [X \bar{2}] \in R_{-1}^{(1)}, \\
        [\bar{6}] &\mapsto [X \bar{5}] \in R_{-1}^{(1)}, \\
        [\bar{9}] &\mapsto [X \bar{8}] \in R_{-1}^{(1)}, \\
        [\bar{2}] &\mapsto [X \bar{1}] \in R_{-1}^{*}.
    \end{align*}

    When $s=-1$ (i.e., $a \in \{\bar{1}, \bar{4}, \bar{5}, \bar{7}, \bar{8}\}$), the corresponding valid moves are:
    \begin{align*}
        [\bar{5}] &\mapsto [\bar{2} X \bar{2}] \in R_{1}^{(1)}, \\
        [\bar{8}] &\mapsto [\bar{2} X \bar{5}] \in R_{1}^{(1)}, \\
        [\bar{1}] &\mapsto [X \bar{0}] \in R_{1}^{(1)}, \\
        [\bar{4}] &\mapsto [X \bar{3}] \in R_{1}^{(1)}, \\
        [\bar{7}] &\mapsto [X \bar{6}] \in R_{1}^{*}.
    \end{align*}
    In every case, calculating the characteristic functions confirms that the resulting state satisfies the target parity condition, completing the proof.
\end{proof}
    
    \subsection{General Single-piece States $R_s^{(1)}$}

Consider a non-terminal general single-piece state $[a X_1 b] \in R_s^{(1)}$ (which implies $\chi_2(a)\chi_2(b) = s$). Suppose the Next Player places a piece $X_2$. By spatial symmetry, we may assume without loss of generality that $X_2$ is placed in the right segment $b$. The game transitions to a new state $[a X_1 b_1 X_2 b_2]$, where $b_1 + b_2 = b - 1$.

Depending on the exact placement of $X_2$, the transition falls into one of three structural categories:
\begin{enumerate}
    \item \textbf{Proximity move (Pre-emptive reduction):} $X_2$ is placed at distance 1 or 2 from $X_1$ (i.e., $b_1 \in \{0, 1\}$). This inherently fractures the board.
    \item \textbf{Interior move:} $X_2$ is placed far from $X_1$ and strictly within the interior of segment $b$ (i.e., $b_1 \ge 2$ and $b_2 \ge 1$).
    \item \textbf{Boundary move:} $X_2$ is placed far from $X_1$ but exactly at the right edge of the board (i.e., $b_1 \ge 2$ and $b_2 = 0$).
\end{enumerate}

The following three lemmas address these cases respectively. We demonstrate that a proximity move by the Next Player is fundamentally a fatal blunder that immediately surrenders the target parity, while for interior and boundary moves, the responding player can invariably execute a reduction move to restore the desired parity.

\begin{lemma}\label{lem:R1_reduce}
    If the Next Player makes a proximity move (Case 1), the resulting state $[w']$ immediately decomposes into a disjunctive sum $[w'_1] \oplus [w'_2]$ where $[w'_1] \in R_{s_1}$, $[w'_2] \in R_{s_2}$, and importantly, $s_1 s_2 = -s$. Thus, this move transitions the game directly to the losing parity for the Next Player.
\end{lemma}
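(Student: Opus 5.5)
The plan is to treat the two possible proximity placements separately, reduce each resulting board with Lemma~\ref{redrule}, and read off the signs of the components using the identities in the Proposition of Section~\ref{sec:prelim}. Throughout I use the convention stated after Definition~\ref{def:regular_states}: an empty segment of length $0$ is a component of SG value $0$. This is consistent with the classes, because $\chi_1(\MT{0})=\chi_1(\MT{9})=1$ and $\chi_2(\MT{0})=1$.

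Since $[a X_1 b]\in R_s^{(1)}$, we have $\chi_2(a)\chi_2(b)=s\neq 0$. So both $\chi_2(a)$ and $\chi_2(b)$ are nonzero, which means $a,b\in\{\MT{0},\MT{3},\MT{7}\}\cup\{\MT{2},\MT{5},\MT{8}\}$. In particular $b\not\equiv 1$, so $b\ge 2$ whenever the right segment is nonempty. The case $b=0$ has no proximity move on the right and is empty.

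\textbf{Case $b_1=0$.} The new position is $[a X X (b-1)]$. By rule (6) of Lemma~\ref{redrule}, or rules (4)/(5) when $a=0$ or $b-1=1$, it equals $[a-1]\oplus[b-2]$, a sum of two empty boards. Their signs are $\chi_1(a-1)$ and $\chi_1(b-2)$. The last bullet of the Proposition in Section~\ref{sec:prelim} gives $\chi_1(a-1)=\chi_2(a)$ and $\chi_1(b-2)=-\chi_2(b)$, because both $\chi_2$ values are nonzero. Hence $s_1s_2=-\chi_2(a)\chi_2(b)=-s$. I will check that the degenerate lengths $a-1=-1$ and $b-2=0$ behave as claimed: they correspond to empty components, and indeed $\chi_1(\MT{9})=1=\chi_2(\MT{0})$ and $\chi_1(\MT{0})=1=-\chi_2(\MT{2})$.

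\textbf{Case $b_1=1$.} The new position is $[a X 1 X (b-2)]$. By rule (3) of Lemma~\ref{redrule} it equals $[aX]\oplus[X(b-2)]$.
\begin{itemize}
\item The first component is $[aX0]$. Since $\chi_2(a)\chi_2(0)=\chi_2(a)\neq0$, it lies in $R^{(1)}_{\chi_2(a)}$.
\item For the second component $[0X(b-2)]$, I go through the six admissible residues of $b$:
\begin{itemize}
\item For $b\in\{\MT{0},\MT{2},\MT{5},\MT{7}\}$, a direct check gives $\chi_2(b-2)=-\chi_2(b)\neq 0$, so the component lies in $R^{(1)}_{-\chi_2(b)}$.
\item For $b\equiv 3$, we get $\chi_2(\MT{1})=0$. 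Here the component is $[X\MT{1}]\in R_{-1}^*$, and $-1=-\chi_2(\MT{3})$.
\item For $b\equiv 8$, we get $\chi_2(\MT{6})=0$. Here the component is $[X\MT{6}]\in R_1^*$, and $1=-\chi_2(\MT{8})$.
\end{itemize}
\end{itemize}
In all cases the product of the component signs is $-s$.

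The main obstacle is exactly this residue-by-residue verification. The exceptional classes $R^*_{\pm1}$ are needed precisely where $\chi_2(b-2)$ vanishes. I will also need to confirm that the small instances ($a=0$, giving the terminal-like $[X]$ component, and $b=2,3$) reduce correctly under Lemma~\ref{redrule} rather than only modulo $10$. The left-segment placements follow by the reflection symmetry of the classes.
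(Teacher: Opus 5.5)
Your proof is correct and follows the paper's argument. It uses the same split into $b_1=0$ and $b_1=1$, the same reductions via Lemma~\ref{redrule} to $[a-1]\oplus[b-2]$ and $[aX]\oplus[X(b-2)]$, the last bullet of the Proposition for the signs, and the exceptional classes $R^*_{\pm1}$ where $\chi_2(b-2)=0$. Your residue-by-residue check is in fact cleaner than the paper's: the paper writes the second component as $[X(b-3)]$ and evaluates $\chi_1(b-3)$, which agrees with $\chi_2(b-2)$ only when the latter is nonzero, and it cites $a\equiv 3$ rather than $b\equiv 3,8$ as the exceptional residues.
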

\begin{proof}
    A proximity move to the right of $X_1$ restricts $[w']$ to two possible configurations:
    \begin{align*}
        [a X_1 X_2 (b-1)] = [a X_1 X_2 1 (b-2)] &\cong [(a-1)] \oplus [(b-2)], \quad (\text{if } b_1 = 0), \\
        [a X_1 1 X_2 (b-2)] = [a X_1 1 X_2 1 (b-3)] &\cong [a X] \oplus [X (b-3)], \quad (\text{if } b_1 = 1).
    \end{align*}
    For the first configuration, given $\chi_2(a)\chi_2(b) = s$, Proposition 2.1 dictates that $\chi_1(a-1) = \chi_2(a)$ and $\chi_1(b-2) = -\chi_2(b)$. Consequently, $[(a-1)] \in R_{\chi_2(a)}^{(0)}$ and $[(b-2)] \in R_{-\chi_2(b)}^{(0)}$. The product of their parities is $\chi_2(a)(-\chi_2(b)) = -s$, as desired.
    
    For the second configuration, evaluating the characteristic functions similarly yields $\chi_2(a) \cdot \chi_1(b-3) = -s$ for all non-exceptional residue classes. For exceptional boundary cases (e.g., when $a \equiv 3 \pmod{10}$), the components naturally map into the corresponding exceptional subsets $R^*$, preserving the total parity $s_1 s_2 = -s$.
\end{proof}

\begin{lemma}\label{lem:R1_normal}
    If the Next Player makes an interior move (Case 2), resulting in $[a X_1 b_1 X_2 b_2]$ with $b_1 \ge 2, b_2 \ge 1$, the responding player can always place a piece adjacent to $X_2$ to force a reduction $[v] \cong [v_1] \oplus [v_2]$ such that $v_1 \in R_{s_1}$, $v_2 \in R_{s_2}$, and $s_1 s_2 = s$.
\end{lemma}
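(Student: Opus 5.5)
The plan mirrors the proof of Lemma~\ref{lem:R0_normal}. The responding player has two candidate moves adjacent to $X_2$: one immediately to its left, one immediately to its right. By Lemma~\ref{redrule} (rules (4)--(6), with the empty segment between $X_1$ and the new pair shortened by one), these reduce as
\begin{align*}
[a X_1 (b_1-1) X X_2 b_2] &= [a X_1 (b_1-2) 1 X X 1 (b_2-1)] \cong [a X_1 (b_1-2)] \oplus [(b_2-1)],\\
[a X_1 b_1 X_2 X (b_2-1)] &= [a X_1 (b_1-1) 1 X X 1 (b_2-2)] \cong [a X_1 (b_1-1)] \oplus [(b_2-2)].
\end{align*}
The left components are single-piece boards, and the right components are empty boards. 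The hypothesis $b_1\ge 2$ makes $b_1-2\ge 0$, so the left component is always a genuine $[aXc]$ with $c\ge 0$.

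Write $s=\chi_2(a)\chi_2(b)$ with $b=b_1+b_2+1$. Since $s\neq 0$, both $\chi_2(a)$ and $\chi_2(b)$ are nonzero. In the generic situation both components lie in $R^{(1)}$ and $R^{(0)}$, and the target parity $s$ is reached if
\[
\chi_2(b_1-2)\,\chi_1(b_2-1)=\chi_2(b_1+b_2+1)\quad\text{or}\quad \chi_2(b_1-1)\,\chi_1(b_2-2)=\chi_2(b_1+b_2+1),
\]
after cancelling the common factor $\chi_2(a)$. This is an analogue of Lemma~\ref{lem:alpha}, a statement about the residues $x=b_1 \bmod 10$ and $y=b_2 \bmod 10$.

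I would prove it with the same $\sigma$-decomposition used in the proof of Lemma~\ref{lem:alpha}, writing $7x$ as $c+d$ and expressing $\chi_2$ via the position of $7x$ in the cyclic order. If that becomes messy, I would fall back on the toroidal-table check of Figure~\ref{fig:lemma}, now a $10\times 10$ table indexed by $(b_1,b_2)$. The symmetries of Proposition~2.1 (reflection $a\mapsto -a$ and shift by $5$) should cut the table roughly by a factor of four. Note that $\chi_2(b)\ne0$ already confines $b_1+b_2+1$ to six of the ten residues.

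The main obstacle is the degenerate cases where this identity alone does not suffice.
\begin{itemize}
\item If $\chi_2(b_1-2)=0$ (resp.\ $\chi_2(b_1-1)=0$), the left component $[aXc]$ is not in $R^{(1)}$. When $a=0$ it may still be regular through $R^*$: $[X\bar 6]$ has value $0$ and $[X\bar 1]$ has value $1$. When $a\ge1$ it is not regular, so that option must be discarded and the other must work.
\item If $b_2=1$, the right-hand move is illegal, since there is no cell to its right, and only the left-hand option remains.
\item If $b_2=2$, the right component $[0]$ is empty; this is harmless, because $\chi_1(0)=1$ matches the value-$0$ convention for an eliminated segment.
\end{itemize}

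So the residue check must be organized as follows. For each residue pair, first test the option whose left $\chi_2$ is nonzero. For the pairs where both left $\chi_2$ values vanish, or where $b_2=1$ forces a single option, I expect to verify directly that the surviving option works. I would also invoke the auxiliary identities $[X2Xw]=[2Xw]$ and $[wX4Xv]=[wX2]\oplus[2Xv]$ if a small configuration such as $b_1=2$ still needs rescuing. I would carry out this bookkeeping last, as a short explicit list, after the generic identity is established.
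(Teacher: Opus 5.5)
\textbf{The gap is your treatment of $b_2=1$.} Apart from that, your route is the paper's: the two replies adjacent to $X_2$, reduced by Lemma~\ref{redrule} to $[aX(b_1-2)]\oplus[(b_2-1)]$ and $[aX(b_1-1)]\oplus[(b_2-2)]$, then a residue identity after cancelling $\chi_2(a)$.

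\textbf{How the paper closes the residue check.} It argues structurally rather than by table. The zero set $\{1,4,6,9\}$ of $\chi_2$ contains no two consecutive residues, so some $i\in\{1,2\}$ has $\chi_2(b_1-i)\neq 0$. For $\chi_2(x),\chi_2(b)\neq 0$ one has $\chi_2(x)\chi_1(b-x-4)=\chi_2(b)$. To see this, write $7x=\epsilon_x+\delta_x$ and $7b=\epsilon_b+\delta_b$ with $\epsilon\in\{0,5\}$ and $\delta\in\{-1,0,1\}$. Then $7(b-x-4)\equiv(\epsilon_b-\epsilon_x)+(\delta_b-\delta_x+2)$, and $\delta_b-\delta_x+2\in\{0,\dots,4\}$. (The paper's text writes this offset as $-2$, but $b=b_1+b_2+1$ gives $-4$.) Taking $x=b_1-i$ finishes the proof. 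So the case where ``both left $\chi_2$ values vanish'' never occurs, and no $R^*$ rescue or auxiliary identity is needed.

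\textbf{Why $b_2=1$ is not special.} The cell to the right of $X_2$ exists, and filling it is legal because $b_1\ge 2$. The result is $[aXb_1XX]=[aX(b_1-1)\,1XX]=[aX(b_1-1)]$ by rule (5) of Lemma~\ref{redrule}. Its parity $\chi_2(a)\chi_2(b_1-1)$ agrees with the generic formula, because $\chi_1(-1)=\chi_1(9)=1=\chi_1(0)$.

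\textbf{What goes wrong if you discard that reply.} Your plan then fails whenever $\chi_2(b_1-2)=0$, i.e.\ $b\equiv 0,3,5,8 \pmod{10}$, except when $a=0$ and $b\equiv 0,5$, where $[X\MT{6}]$ or $[X\MT{1}]$ happens to rescue it. Concretely, take $[3X5]$, which has $s=-1$, and the move $[3X5]\mapsto[3X3X1]$. Your only ``surviving'' reply gives $[3X2XX1]\cong[3X1]$. That position is not regular; its SG value is $3$ in Table~\ref{tab:single_piece}. The reply you excluded gives $[3X3XX]=[3X2]$ with $\chi_2(3)\chi_2(2)=-1=s$.

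So the direct verification you defer to the end would not go through as planned. You must keep the right-hand reply when $b_2=1$, using rule (5) in place of rule (6). Once you do, the generic identity covers every case uniformly.
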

\begin{proof}
    The responding player has two candidate moves adjacent to $X_2$, producing either:
    \begin{align*}
        [a X (b_1-1) X X b_2] &\cong [a X (b_1-2)] \oplus [(b_2-1)], \quad \text{or} \\
        [a X b_1 X X (b_2-1)] &\cong [a X (b_1-1)] \oplus [(b_2-2)].
    \end{align*}
    We must prove that at least one of these resulting parities matches $s$, meaning either:
    \begin{align}
        \chi_2(a)\chi_2(b_1-2)\chi_1(b_2-1) &= s, \quad \text{or} \label{eq:R1_cond1} \\
        \chi_2(a)\chi_2(b_1-1)\chi_1(b_2-2) &= s. \label{eq:R1_cond2}
    \end{align}
    Since $\chi_2(a)\chi_2(b) = s$, we can factor out $\chi_2(a)$ by requiring the remaining terms to equal $\chi_2(b)$. 
    By the definition of $\chi_2$, the values $\chi_2(b_1-1)$ and $\chi_2(b_1-2)$ cannot simultaneously be $0$. Thus, there exists an index $i \in \{1, 2\}$ such that $\chi_2(b_1-i) \in \{1, -1\}$. 
    Recall that $b = b_1 + b_2 + 1$, which implies $b_2 - (3-i) = b - (b_1 - i) - 2$. By evaluating the behavior of $\chi_1(x-y-2)$ for fixed $\chi_2(x)$ and $\chi_2(y)$, it rigorously follows that $\chi_2(b_1-i)\chi_1(b_2-(3-i)) = \chi_2(b)$. Multiplying both sides by $\chi_2(a)$ exactly satisfies condition (\ref{eq:R1_cond1}) or (\ref{eq:R1_cond2}) depending on the chosen $i$.
\end{proof}

\begin{lemma}\label{lem:R1_border}
    If the Next Player makes a boundary move (Case 3), resulting in $[a X_1 (b-1) X_2]$ with $b \ge 3$, the responding player can always find a reduction move yielding $[v] \cong [v_1] \oplus [v_2]$ where $s_1 s_2 = s$.
\end{lemma}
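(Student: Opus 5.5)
The plan is to mirror the proofs of Lemma~\ref{lem:R1_normal} and Lemma~\ref{lem:R0_border}. I will exhibit two candidate replies whose reduced forms are single-piece states. Then I will check, residue by residue, that at least one of them has parity $s$. Throughout, $[aX_1(b-1)X_2]\in$ position reached from $R_s^{(1)}$, so $\chi_2(a)\chi_2(b)=s\neq 0$. In particular $\chi_2(a)\neq 0$ and $\chi_2(b)\neq 0$.

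\textbf{First candidate.} Play immediately to the left of $X_2$, obtaining $[aX(b-3)XX]=[aX(b-4)1XX]$. By Lemma~\ref{redrule}(5) this reduces to $[aX(b-4)]$, with the eliminated segment treated as a zero component. Its parity is $\chi_2(a)\chi_2(b-4)$, so this reply succeeds whenever $\chi_2(b-4)=\chi_2(b)$.

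\textbf{Second candidate.} Play immediately to the right of $X_1$, obtaining $[aXX(b-2)X]=[(a-1)1XX1(b-3)X]$. By Lemma~\ref{redrule}(6) this is $[a-1]\oplus[(b-3)X]$. The second summand is the single-piece state $[(b-3)X0]$, of parity $\chi_2(b-3)\chi_2(0)=\chi_2(b-3)$. Since $\chi_2(a)\neq0$, Proposition~2.1 gives $\chi_1(a-1)=\chi_2(a)$. Hence the total parity is $\chi_2(a)\chi_2(b-3)$, and this reply succeeds whenever $\chi_2(b-3)=\chi_2(b)$.

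\textbf{Residue check.} From $x/3=7x$ one computes that $\chi_2(x)=1$ exactly for $x\in\{\bar0,\bar3,\bar7\}$, that $\chi_2(x)=-1$ exactly for $x\in\{\bar2,\bar5,\bar8\}$, and that $\chi_2(x)=0$ otherwise. So $b$ ranges over these six classes, and:
\begin{itemize}
\item for $b\equiv 0,3,5,8$, the value $b-3\equiv 7,0,2,5$ has the same $\chi_2$, so the second candidate works;
\item for $b\equiv 7,2$, the value $b-4\equiv 3,8$ has the same $\chi_2$, so the first candidate works.
\end{itemize}
If several options exist one may also use the exceptional classes $[X\bar4X],[X\bar9X]$ via a distance-two reply, but this should not be needed.

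\textbf{Main obstacle.} The delicate step is the small-length edge cases, where a segment in the reduction has length $0$ or would be negative. Examples are $b=3$ in the second candidate, where the board becomes $[aXX1X]$, and small $a$ with $a-1=0$. For each I would check directly, using Lemma~\ref{redrule}(1),(2) where an $X1X$ pattern appears, that the reduced forms are still those claimed; for example, a zero-length empty board counts as $[0]$ with $\chi_1(0)=1$. I would also confirm that every reply is legal, i.e.\ that no three-in-a-row is created, which holds since $b-1\ge 2$.
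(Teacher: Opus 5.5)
\textbf{There is a genuine gap: your first candidate is miscounted, and the residue classes $b\equiv 2,7 \pmod{10}$ are not covered.}

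Placing a piece immediately left of $X_2$ in $[aX_1(b-1)X_2]$ leaves $b-2$ empty cells between $X_1$ and the new piece. The position is therefore $[aX(b-2)XX]=[aX(b-3)1XX]\cong[aX(b-3)]$. Your $[aX(b-3)XX]$ has one cell fewer than the board. So this reply's parity is $\chi_2(a)\chi_2(b-3)$, which is exactly the same condition as your second candidate, and the second bullet of your residue check has no support.

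For $b\equiv 2,7\pmod{10}$ one has $\chi_2(b-3)=0$. Both replies then land on positions of the shape $[aX\bar{9}]$, $[\bar{9}X]$, $[aX\bar{4}]$ or $[\bar{4}X]$, and none of these lies in $R^{(1)}$ or $R^{*}$. This is not just bookkeeping:
\begin{itemize}
\item From $[3X7]\in R_1^{(1)}$ the boundary move gives $[3X6X]$. Your two replies produce $[3X4]$ and $[2]\oplus[4X]$, both of SG value $3$ by Table~\ref{tab:single_piece}.
\item From $[X7]$ the boundary move gives the symmetric $[X6X]$. Both of your replies give $[X4]$ or its mirror $[4X]$, again of SG value $3$.
\end{itemize}

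\textbf{The missing replies.}

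\emph{Case $a\ge 2$.} The case $a=1$ cannot occur because $\chi_2(1)=0$. Play immediately to the left of $X_1$, which is the move the paper uses. This gives $[(a-1)XX(b-1)X]\cong[(a-2)]\oplus[(b-2)X]$. Here $\chi_1(a-2)=-\chi_2(a)$ because $\chi_2(a)\neq 0$. For $b\equiv 7,2$ we have $b-2\equiv 5,0$, so $\chi_2(b-2)=-\chi_2(b)$ and the total parity is $s$. For example, $[3X6X]\mapsto[2XX6X]\cong[1]\oplus[5X]$, of SG value $1\oplus 1=0$.

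In fact this single reply handles all six admissible classes of $b$, once you allow $[\bar{1}X]\in R_{-1}^{*}$ and $[\bar{6}X]\in R_{1}^{*}$ for $b\equiv 3,8$. Be aware that the paper's displayed formula drops $X_2$: the correct second summand is $[(b-2)X]$, not $[(b-2)]$.

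\emph{Case $a=0$.} The move left of $X_1$ does not exist here. The distance-two reply you set aside is exactly what is required: $[X1X(b-3)X]\cong[X(b-3)X]$ by Lemma~\ref{redrule}(1).
\begin{itemize}
\item For $b\equiv 7$ (where $s=1$) this lies in $[X\bar{4}X]\subset R_1^{*}$. For example, $[X6X]\mapsto[X1X4X]\cong[X4X]$, which has SG value $0$.
\item For $b\equiv 2$ (where $s=-1$) it lies in $[X\bar{9}X]\subset R_{-1}^{*}$.
\end{itemize}

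Your second candidate is correct for $b\equiv 0,3,5,8$. For $a=0$ read it via Lemma~\ref{redrule}(4) as $[(b-3)X]$, which gives the same parity. Combined with the two replies above, this closes the case analysis.
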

\begin{proof}
    The primary candidate move for the responding player is to place a piece to the immediate left of the newly placed $X_2$ (acting as $X_3$). Note that placing it adjacent to $X_1$ is handled symmetrically by the left segment $a$. 
    If a valid move can be made adjacent to $X_1$ without bridging the gap to $X_2$ or hitting the left boundary, we obtain:
    \begin{equation*}
        [(a-1) X X (b-1) X_2] \cong [(a-2)] \oplus [(b-2)].
    \end{equation*}
    Given $\chi_2(a)\chi_2(b) = s$, Proposition 2.1 implies $\chi_1(a-2) = -\chi_2(a)$ and $\chi_1(b-2) = -\chi_2(b)$. The product is $(-\chi_2(a))(-\chi_2(b)) = s$. Hence, $[(a-2)] \in R_{s_1}^{(0)}$ and $[(b-2)] \in R_{s_2}^{(0)}$ with $s_1 s_2 = s$.

    This specific reduction move is structurally obstructed only if $a=0$ (the boundary is reached) or $a=1$ (the move would illegally create a three-in-a-row). 
    In such restricted boundary configurations, the responding player simply places a piece to the immediate right of $X_1$ instead:
    \begin{equation*}
        [X X (b-2)] \cong [\emptyset] \oplus [(b-3)] = [(b-3)].
    \end{equation*}
    Since $a \in \{0, 1\}$, we have $\chi_2(a) = 1$. Thus $\chi_2(b) = s$. By definition of the characteristic functions, $\chi_1(b-3) = \chi_2(b) = s$ holds for all non-exceptional cases, yielding $[(b-3)] \in R_s^{(0)}$. For the few exceptional residue classes (e.g., $b \equiv 2 \pmod{10}$), the reduced component rigorously falls into $R_s^*$, completing the proof.
\end{proof}
    
    \subsection{Exceptional Boundary States $R_s^{*}$}

Let $[w] \in R_s^{*}$ be an exceptional boundary state. Up to spatial reflection, these states are entirely represented by the four residue classes $[X \bar{6}]$, $[X \bar{4} X]$, $[X \bar{1}]$, and $[X \bar{9} X]$. 
Suppose the Next Player makes a valid move. Depending on the geometry of the placement, the transition falls into one of three distinct categories:

\begin{enumerate}
    \item \textbf{Terminal state exhaustion:} No further valid moves exist. This applies exclusively to the configurations $[X 2]$ and $[2 X]$ (within the classes $[X \bar{2}]$ and $[\bar{2} X]$ when the empty segment has exactly length $2$).
    \item \textbf{Proximity move:} The Next Player places a piece at distance $1$ or $2$ from an existing piece (i.e., adjacent to or with a single-gap separation from an existing boundary piece).
    \item \textbf{Interior move:} The Next Player places a piece strictly in the interior of the empty segment, maintaining a distance of at least $3$ from all existing pieces.
\end{enumerate}

The following two lemmas address cases (2) and (3), demonstrating that proximity moves inherently surrender the parity, while interior moves can invariably be countered with a reduction move to preserve the parity.

\begin{lemma}\label{lem:Rs_reduce}
    If the Next Player makes a proximity move (Case 2), the game reduces to a state $[w']$ or decomposes into a disjunctive sum $[w'_1] \oplus [w'_2]$ such that the resulting parity exactly aligns with $R_{-s}$.
\end{lemma}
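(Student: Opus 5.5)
The plan is a direct, exhaustive enumeration. Up to reflection there are only four exceptional families, $[X\bar 6]$, $[X\bar 4X]$ (both with $s=1$) and $[X\bar 1]$, $[X\bar 9X]$ (both with $s=-1$). In each family the Next Player has at most four proximity moves, namely a piece at distance $1$ or $2$ from a boundary piece. For $[X\bar 4X]$ and $[X\bar 9X]$ the two ends are mirror images, so only the moves next to the left $X$ need treatment. Each such move creates a factor $XX$ or $X1X$, and I would simplify it at once with Lemma~\ref{redrule}:
\begin{align*}
[X X (c-1) \cdots] &= [X X 1 (c-2)\cdots] \cong [(c-2)\cdots], \\
[X 1 X (c-2) \cdots] &\cong [X (c-2) \cdots].
\end{align*}
Here $c$ is the length of the empty segment. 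Every successor therefore collapses to a shorter board whose residue class is known, and it remains to place that board in $R_{-s}$ by evaluating $\chi_1$ or $\chi_2$.

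Sample computations show the pattern. From $[X\bar 6]$, the adjacent move gives $[\bar 4]$, and $\chi_1(4)=-1$ because $4/3=\bar 8$; so this lands in $R_{-1}^{(0)}$, as required. From $[X\bar 4X]$, the adjacent move gives $[\bar 2 X]=[\bar 2 X 0]$, with $\chi_2(2)\chi_2(0)=(-1)(1)=-1$; so this lies in $R_{-1}^{(1)}$. In the same way I would check $[X\bar 1]\mapsto[\bar 9]$ with $\chi_1(9)=1$, and $[X\bar 9X]\mapsto[\bar 7X]$ with $\chi_2(7)\chi_2(0)=1$. I would use the symmetries of Proposition~2.1 ($\chi_2(-a)=\chi_2(a)$, $\chi_2(5\pm a)=-\chi_2(a)$) to pair off cases rather than recompute each one.

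The main obstacle is the distance-$2$ moves. After the $X1X$ reduction these return a board of the same shape, a boundary piece followed by a gap, with the gap shortened by $2$. This gives $[X\bar 4]$ from $[X\bar 6]$, $[X\bar 2X]$ from $[X\bar 4X]$, $[X\bar 9]$ from $[X\bar 1]$, and $[X\bar 7X]$ from $[X\bar 9X]$. The single-piece ones, $[X\bar 4]=[0X\bar 4]$ and $[X\bar 9]=[0X\bar 9]$, have $\chi_2(4)=\chi_2(9)=0$, so they belong to neither $R^{(1)}$ nor $R^{*}$. For these I would first try to reinterpret the state with the second simplification lemma. If that fails, I would adjoin them to the regular family, with the value $1-s$ forced by the statement, and verify their successors inside the same induction. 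This is the step where the proof is most fragile, and I expect the authors either have an extra reduction here or treat these classes together with the terminal cases such as $[X2]$.

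Finally, short representatives where a reduction would run off the board must be checked directly. Examples are $c\le 3$, and $[X4X]$, where the adjacent move gives $[X X 1 1 X]$. These few small boards can be settled by hand, giving the claimed parity $R_{-s}$ in every case.
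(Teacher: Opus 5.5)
Your distance-$1$ computations are correct, and more accurate than the paper's. The paper's bullet list writes $[XX\bar4]\cong[\bar3]$ and $[XX\bar9]\cong[\bar8]$, which are off by one. The adjacent move on $[X\bar6]$ leaves $[XX\bar5]\cong[\bar4]$, and on $[X\bar1]$ it leaves $[XX\bar0]\cong[\bar9]$. The memberships the paper asserts are also false, since $\chi_1(3)=1$ and $\chi_1(8)=-1$. The paper's proof treats only these four adjacent moves and never addresses distance-$2$ moves. So there is no hidden ``extra reduction'' for the step you flagged.

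That step is a genuine gap, and your fallback cannot close it, because the statement is false there.
\begin{itemize}
\item The successor $[X\bar4]$ has its own distance-$2$ option $[X1X\bar2]\cong[X\bar2]=[0X\bar2]\in R_{-1}^{(1)}$, which has value $1$ by the induction. So $[X\bar4]$ cannot have value $1$.
\item Likewise $[X\bar9]\mapsto[X1X\bar7]\cong[X\bar7]\in R_{1}^{(1)}$, which has value $0$. So $[X\bar9]$ cannot have value $0$.
\end{itemize}
Adjoining these states to $R_{-s}$ is therefore inconsistent with the induction. Table~\ref{tab:single_piece} confirms this: $\mathcal{G}([X4])=\mathcal{G}([X14])=3$ and $\mathcal{G}([X9])=2$. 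Hence $[X6]\mapsto[X1X4]$ and $[X11]\mapsto[X1X9]$ are outright counterexamples to the lemma, and your concluding claim that every case lands in $R_{-s}$ cannot be established.

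What is true, and is all that Theorem~\ref{thm:main_SG} needs, is that a distance-$2$ move is not a blunder. It is an ordinary move, answered by playing next to the new piece:
\begin{itemize}
\item $[X\bar4]\mapsto[XX\bar3]\cong[\bar2]\in R_1^{(0)}$;
\item $[X\bar2X]\mapsto[XX\bar1X]\cong[\bar0X]\in R_1^{(1)}$;
\item $[X\bar9]\mapsto[XX\bar8]\cong[\bar7]\in R_{-1}^{(0)}$;
\item $[X\bar7X]\mapsto[XX\bar6X]\cong[\bar5X]\in R_{-1}^{(1)}$.
\end{itemize}
Each reply restores parity $s$. The lemma should therefore be restricted to distance-$1$ moves, and the distance-$2$ cases moved into the reactive part (1) of Theorem~\ref{thm:reduction_mechanism}.
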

\begin{proof}
    A proximity move structurally anchors to the existing boundary piece(s). Evaluating the reduction rules for each equivalence class yields:
    \begin{itemize}
        \item For $[w] \in [X \bar{6}]$, the Next Player's proximity move yields $[X X \bar{4}] \cong [\bar{3}] \in R_{-1}^{(0)}$.
        \item For $[w] \in [X \bar{4} X]$, the move yields $[X X \bar{3} X] \cong [\bar{2} X]$. Since $\chi_2(2)\chi_2(0) = (-1)(1) = -1$, this reduced state belongs to $R_{-1}^{(1)}$.
        \item For $[w] \in [X \bar{1}]$, the move yields $[X X \bar{9}] \cong [\bar{8}] \in R_{1}^{(0)}$.
        \item For $[w] \in [X \bar{9} X]$, the move yields $[X X \bar{8} X] \cong [\bar{7} X]$. Since $\chi_2(7)\chi_2(0) = (1)(1) = 1$, this reduced state belongs to $R_{1}^{(1)}$.
    \end{itemize}
    In all cases, the initial parity $s$ flips to $-s$, confirming that the Next Player transitions the game to a losing configuration.
\end{proof}

\begin{lemma}\label{lem:Rs_normal}
    If the Next Player makes an interior move (Case 3), the responding player can always place a piece adjacent to the newly placed one such that the reduced state decomposes as $[v] \cong [v_1] \oplus [v_2]$, where $v_1 \in R_{s_1}$, $v_2 \in R_{s_2}$, and $s_1 s_2 = s$.
\end{lemma}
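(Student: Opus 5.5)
Write the exceptional states, up to reflection, as $[X c]$ with $c\in\MT{6}\cup\MT{1}$ and $[X c X]$ with $c\in\MT{4}\cup\MT{9}$. An interior move gives $[X c_1 X_2 c_2]$ or $[X c_1 X_2 c_2 X]$, with $c_1,c_2\ge 2$ on sides bounded by a piece, $c_2\ge 1$ at an open wall, and $c_1+c_2=c-1$. As in Lemmas~\ref{lem:R0_normal} and~\ref{lem:R1_normal}, the responder has two candidate replies adjacent to $X_2$. By Lemma~\ref{redrule} these give
\begin{align*}
[X (c_1-1) X X c_2] &\cong [X (c_1-2)] \oplus [c_2-1],\\
[X c_1 X X (c_2-1)] &\cong [X (c_1-1)] \oplus [c_2-2]
\end{align*}
in the one-wall case. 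In the two-wall case the trailing empty boards become $[(c_2-1) X]$ and $[(c_2-2) X]$.

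The first step is a single sign function for one-sided boards. Define $\psi(m)=\chi_2(m)$ when $\chi_2(m)\neq 0$, $\psi(m)=1$ for $m\in\MT{6}$ and $\psi(m)=-1$ for $m\in\MT{1}$. Then $[X m]=[0 X m]$ lies in $R_{\psi(m)}$ whenever $m\notin \MT{4}\cup\MT{9}$, since $\chi_2(0)=1$. The target sign is $s=\psi(c)$ in the one-wall case. In the two-wall case it is $s=1$ for $\MT{4}$ and $s=-1$ for $\MT{9}$.

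The zero set of $\chi_2$ is $\MT{1}\cup\MT{4}\cup\MT{6}\cup\MT{9}$. So among the consecutive residues $c_1-1,c_1-2$ at most one lies in the bad set $\MT{4}\cup\MT{9}$. Hence there is always an index $i\in\{1,2\}$ with $[X(c_1-i)]$ regular, and often both indices work.

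The core step is the residue identity. Put $x=c_1-i$ and $y=c_2-(3-i)$, so that $x+y+3=c$. In the one-wall case I need $\psi(x)\chi_1(y)=\psi(x+y+3)$ for an admissible $i$. In the two-wall case I need $\psi(x)\psi(y')=s$, where $y'=c_2-(3-i)$ is read on the right and $x+y'+3=c$. I would prove these the way Lemma~\ref{lem:alpha} was proved. Push everything through $x\mapsto 7x$ and use the Proposition in Section~\ref{sec:prelim}: $\chi_1(a-1)=\chi_2(a)$ and $\chi_1(a-2)=-\chi_2(a)$ when $\chi_2(a)\neq0$, together with the symmetries under $a\mapsto -a$ and $a\mapsto 5\pm a$. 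These cut the check to a few representatives. Failing a slick argument, the check is a finite table over $x\bmod 10$ for each fixed $c$, with two choices of $i$: four classes of $c$ and ten values of $x$. I would present it as a small figure in the style of Figure~\ref{fig:lemma}.

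I expect the main obstacle to be the small and degenerate cases rather than the residue identity. These include $c_2-1=0$ or $c_2-2=0$, where the trailing component is the trivial empty board (sign $1$, consistent with $\chi_1(0)=1$). They also include $c_1-i\in\{0,1\}$, where $[X0]$ or $[X1]$ must be checked directly. Most delicate is the two-wall case with $c_2$ small, for instance $[X c_1 X 2 X]$: there one reply may be illegal because it creates three in a row, or it reduces through $[X1X]=[X]$ instead of splitting. For these I would first try to show that the other index $i$ always works. If that fails, I would invoke the auxiliary lemma $[X2X w]=[2X w]$ to treat the reply as landing in $R^*$ directly, and verify the handful of remaining residue pairs by hand.
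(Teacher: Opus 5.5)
\textbf{Gap: the core identity is off by one.} From $c_1+c_2=c-1$ you get $x+y=c_1+c_2-3=c-4$. So the one-wall condition is $\psi(x)\chi_1(y)=\psi(x+y+4)$, and with two walls $x+y'+4=c$. This is not cosmetic. With your $+3$, the one-wall table is satisfied exactly by the $x$ with $\chi_2(x)\neq 0$. For $(c_1-2,c_1-1)\equiv(\MT{1},\MT{2})$ or $(\MT{6},\MT{7})$ it would send you to $x=\MT{2}$ or $\MT{7}$, which gives the wrong sign in the actual game. In the two-wall case the $+3$ version fails for every $x$, so the plan stalls. Relatedly, ``some $i$ makes $[X(c_1-i)]$ regular'' is not a selection rule. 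You must say which neighbour to take, and regularity of the left component does not decide it.

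\textbf{Repair.} With the correct offset the check closes quickly.
\begin{itemize}
\item \emph{One wall.} Using $\chi_1(5+a)=-\chi_1(a)$ and $\psi(\MT{1})=-\psi(\MT{6})$, both classes reduce to $\psi(x)=\chi_1(2-x)$. This holds for $x\in\{\MT{0},\MT{1},\MT{3},\MT{5},\MT{6},\MT{8}\}$ and fails for $x\in\{\MT{2},\MT{7}\}$. No two of $\MT{2},\MT{4},\MT{7},\MT{9}$ are consecutive, so one of $c_1-1,c_1-2$ always works.
\item \emph{Two walls.} Here $y'\equiv -x$ resp.\ $5-x$, so $x\in\{\MT{1},\MT{6}\}$ is useless: the partner lands in $\MT{4}\cup\MT{9}$. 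Take $\chi_2(x)\neq0$ and use $\chi_2(-a)=\chi_2(a)$ and $\chi_2(5-a)=-\chi_2(a)$.
\item \emph{Degenerate cases.} These are harmless. We have $[X]\in R_1^{(1)}$ and $[X1]\in R_{-1}^*$. An absent or empty trailing board has sign $1=\chi_1(\MT{9})=\chi_1(\MT{0})$. For $c_1,c_2\ge 2$ both two-wall replies are legal and split by Lemma~\ref{redrule}(6), e.g.\ $[X c_1 X X 1 X]\cong[X(c_1-1)]\oplus[X]$. So the auxiliary lemma is unnecessary.
\item \emph{Far edge.} Do cover the move with $c_2=0$, which your setup omits. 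Only the left reply is legal, giving $[X(c-3)]$ with $c-3\in\MT{3}\cup\MT{8}$, of sign $\psi(c)$.
\end{itemize}

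\textbf{Comparison with the paper.} Once repaired, your argument is actually sounder than the paper's. The paper takes any $i$ with $\chi_2(c_1-i)\neq 0$ and then invokes $\chi_1(-a)=-\chi_1(a)$. That identity is false for $a\in\{\MT{0},\MT{5}\}$; for example $\chi_1(0)=1$. So the paper's one-wall case breaks exactly when $c_1-i\in\{\MT{2},\MT{7}\}$. For instance, take the move $[X6]\mapsto[X3X2]$. The paper's rule forces the reply $[X3XX1]\cong[X2]$, of value $1$. The winning reply is instead $[X2XX2]\cong[X1]\oplus[1]$, which needs $[X1]\in R_{-1}^*$. That is precisely your $\psi$. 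The paper's two-wall case is correct and coincides with the corrected version of yours.
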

\begin{proof}
    The algebraic structures of $[X \bar{1}]$ and $[X \bar{9} X]$ are identical to those of $[X \bar{6}]$ and $[X \bar{4} X]$, respectively, differing solely in sign arithmetic. For brevity, we detail the proofs for $[X \bar{6}]$ and $[X \bar{4} X]$, where the target parity is $s = 1$.

    \textbf{Case 3.1: $[w] \in [X \bar{6}]$.} \\
    The state can be written as $[X_1 (10k+6)]$ for some $k \in \mathbb{N}$. An interior move places $X_2$, splitting the empty segment into $a$ and $b$, yielding $[X_1 a X_2 b]$ with $a+b = 10k+5$. Since it is an interior move, $a \ge 2$.
    The responding player has two candidate moves adjacent to $X_2$, yielding either:
    \begin{align*}
        [X (a-1) X X b] &\cong [X (a-2)] \oplus [(b-1)], \quad \text{or} \\
        [X a X X (b-1)] &\cong [X (a-1)] \oplus [(b-2)].
    \end{align*}
    By the definition of the characteristic function $\chi_2$, the values $\chi_2(a-2)$ and $\chi_2(a-1)$ cannot simultaneously be $0$. Thus, there exists an index $i \in \{1, 2\}$ such that $\chi_2(a-i) = \sigma \in \{1, -1\}$. 
    We evaluate the corresponding right-side component:
    \begin{align*}
        \chi_1(b - (3-i)) &= \chi_1((10k+5-a) - (3-i)) \\
        &= \chi_1(2 - a + i) \\
        &= -\chi_1(a - i - 2) \quad (\text{by Proposition 2.1}) \\
        &= \chi_2(a - i) = \sigma.
    \end{align*}
    This establishes that $[X (a-i)] \in R_{\sigma}^{(1)}$ and $[(b - (3-i))] \in R_{\sigma}^{(0)}$. Their combined parity is $\sigma \times \sigma = 1 = s$, satisfying the condition.

    \textbf{Case 3.2: $[w] \in [X \bar{4} X]$.} \\
    The state can be written as $[X (10k+4) X]$. An interior move yields $[X a X_2 b X]$ with $a+b = 10k+3$, where $a \ge 2$ and $b \ge 2$.
    The responding player's two candidate moves adjacent to $X_2$ yield:
    \begin{align*}
        [X (a-1) X X b X] &\cong [X (a-2)] \oplus [(b-1) X], \quad \text{or} \\
        [X a X X (b-1) X] &\cong [X (a-1)] \oplus [(b-2) X].
    \end{align*}
    As before, there exists an index $i \in \{1, 2\}$ such that $\chi_2(a-i) = \sigma \in \{1, -1\}$. 
    We evaluate the corresponding right-side component:
    \begin{align*}
        \chi_2(b - (3-i)) &= \chi_2((10k+3-a) - (3-i)) \\
        &= \chi_2(-a + i) \\
        &= \chi_2(a - i) \quad (\text{by Proposition 2.1}) \\
        &= \sigma.
    \end{align*}
    This establishes that $[X (a-i)] \in R_{\sigma}^{(1)}$ and $[(b - (3-i)) X] \in R_{\sigma}^{(1)}$. Their combined parity is $\sigma \times \sigma = 1 = s$, completing the proof.
\end{proof}
    
    \subsection{Synthesis and the Sprague-Grundy Evaluation}

Having exhaustively analyzed all possible legal moves across the three regular sub-families ($R_s^{(0)}, R_s^{(1)}$, and $R_s^*$), we consolidate these tactical maneuvers into a comprehensive structural theorem.

\begin{theorem} \label{thm:reduction_mechanism}
    Let $[w] \in R_s$ be a non-terminal regular position. The game tree from $[w]$ admits the following universal reduction mechanism:
    \begin{enumerate}
        \item \textbf{Reactive reduction:} Regardless of where the Next Player places a piece, the responding player can always secure a reply that strictly fractures the resulting state into a disjunctive sum $[v] \cong \bigoplus_j [v_j]$, such that every $[v_j] \in R_{s_j}$ and the multiplicative parity is preserved: $\prod_j s_j = s$.
        \item \textbf{Proactive reduction:} The Next Player currently facing $[w]$ always possesses at least one valid outbound move that strictly transitions the game into a disjunctive sum $[u] \cong \bigoplus_j [u_j]$, such that every $[u_j] \in R_{s_j}$ and the multiplicative parity is inverted: $\prod_j s_j = -s$.
    \end{enumerate}
\end{theorem}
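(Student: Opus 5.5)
The proof will be an assembly of the lemmas of this section, organized by which of the three sub-families $R_s^{(0)}$, $R_s^{(1)}$, $R_s^{*}$ contains $[w]$. Because all six sub-families are invariant under left-right reflection, we may assume throughout that the opponent's piece lies in the right part of the board, or at the left edge in the boundary cases. For each family, the opponent's possible placements are already split in the paper into exhaustive categories, and each category has been handled by a lemma. Part (1) amounts to checking that these categories cover every legal move. Part (2) amounts to exhibiting one outbound move per family.

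For part (1) we go family by family. If $[w]=[a]\in R_s^{(0)}$, the opponent's move is either the terminal move ($a=1$), an interior move, or a boundary move. Interior moves are answered by Lemma~\ref{lem:R0_normal} and boundary moves by Lemma~\ref{lem:R0_border}. In the boundary case the answer lands in $R_s$ as a single component, which we regard as a sum with trivial factors of sign $1$, following the convention stated after Definition~\ref{def:regular_states}. If $[w]\in R_s^{(1)}$, interior moves are handled by Lemma~\ref{lem:R1_normal} and boundary moves by Lemma~\ref{lem:R1_border}. A proximity move needs no reply inside the strategy, since Lemma~\ref{lem:R1_reduce} shows it already produces parity $-s$. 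To fit the statement literally, we use the fact that a position of parity $-s$ is itself non-terminal regular, so part (2) applied to it returns parity $s$. If $[w]\in R_s^{*}$, interior moves are handled by Lemma~\ref{lem:Rs_normal}, and proximity moves are treated exactly like those in $R_s^{(1)}$, using Lemma~\ref{lem:Rs_reduce}. For sums, we apply the component statement to whichever component the opponent moves in, and multiplicativity of the parities gives the claim.

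For part (2): for $R_s^{(0)}$ the required move is given by Lemma~\ref{lem:R0_reduce}. For $R_s^{(1)}$ and $R_s^{*}$ a proximity move works. Lemmas~\ref{lem:R1_reduce} and \ref{lem:Rs_reduce} show that it yields parity $-s$, so we only need such a move to be legal. For $[aXb]$ with, say, $b\ge 1$, we place the piece immediately next to $X$. This is legal unless $a\ge 1$ and the cell on the other side is already part of a pair, which cannot happen in a single position. The degenerate case $a=b=0$ is terminal. For $R_s^{*}$ the lengths are at least $1$, and at least $4$ in the case $[X\bar4X]$, so there is room for the move. The exception is $[X1]$: it is terminal if it counts as $[X\bar1]$, and the same check is needed for $[X2]$.

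The main obstacle is the small boundary instances that the lemmas gloss over: tiny lengths such as $b_1\in\{0,1\}$ near the edges, the exceptional classes mentioned only in passing in Lemmas~\ref{lem:R1_reduce} and \ref{lem:R1_border}, and the question of which members of $R^{*}$ are actually terminal. I would handle these by listing every regular position with all gaps of length at most about $12$ and checking the claimed replies by direct enumeration. For larger lengths the residue-class arguments in the lemmas apply uniformly.
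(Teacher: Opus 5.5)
Your outline is the same assembly the paper's own proof performs, and it inherits two genuine gaps from it. The first is the step where proximity moves in $R_s^{*}$ are ``treated exactly like those in $R_s^{(1)}$'' via Lemma~\ref{lem:Rs_reduce}: that lemma is false for proximity moves at distance $2$. From $[X k]$ with $k\equiv 6 \pmod{10}$, the move $[X1X(k-2)]\cong[X(k-2)]$ lands in $[X\bar{4}]$, which is not regular since $\chi_2(4)=0$. Concretely, $[X6]\mapsto[X1X4]\cong[X4]$, and $\mathcal{G}([X4])=3$ by the paper's own table. Likewise $[X\bar{1}]\mapsto[X\bar{9}]$ (e.g.\ $\mathcal{G}([X9])=2$). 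The analogous moves from $[X\bar{4}X]$ and $[X\bar{9}X]$ produce $[X\bar{2}X]$ and $[X\bar{7}X]$, which are not among the regular classes. So there is no regular position of parity $-s$ to which part (2) could be applied. This happens for every length in the class, so enumerating gaps up to $12$ and trusting the residue-class argument beyond that does not repair it.

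The missing ingredient is a genuine reply on the far side of the new piece. It gives $[X1XX(k-3)]\cong[X]\oplus[k-4]$, resp.\ $[X1XX(k-3)X]\cong[X]\oplus[(k-4)X]$. Here $k-4\equiv 2,7,0,5$ for $[X\bar{6}]$, $[X\bar{1}]$, $[X\bar{4}X]$, $[X\bar{9}X]$ respectively, so the result lies in $R_s$. The printed computations in Lemmas~\ref{lem:R1_reduce} and~\ref{lem:R1_border} also lose or shift a piece. The distance-$2$ move gives $[aX]\oplus[X(b-2)]$, not $[aX]\oplus[X(b-3)]$. In the border case $X_2$ survives, so for $a=0$ the reply beside $X_1$ gives $[(b-3)X]$, which is irregular when $b\equiv 2,7$. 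In those classes you need the reply $[X1X(b-3)X]\cong[X(b-3)X]$, landing in $[X\bar{9}X]$, resp.\ $[X\bar{4}X]$. The conclusions of these two lemmas survive, but they must be recomputed rather than cited.

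Second, your assertion that a position of parity $-s$ is ``itself non-terminal regular'' is false. It is a sum of regular components, and all of them may be terminal. Consider $[1]$, $[X2]$, $[2X]$, $[X1]$, $[1X]$, all in $R_{-1}$; note that $[X1]$ is not terminal, since its only move is to $[XX]$. From each of these, every move ends the game, producing $[X]$, $[XX1]$, $[X1X]\cong[X]\oplus[X]$, $[XX]$ or their mirror images. There is then no reply at all, so part (1) is literally false for these positions, and no appeal to part (2) can rescue it.

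What is true, and all that Theorem~\ref{thm:main_SG} actually uses, is a dichotomy for each option of $[w]$:
\begin{itemize}
\item either the option already is a sum of regular positions of parity $-s$, whose SG value $\frac{1+s}{2}$ differs from the target $\frac{1-s}{2}$, so no reply is needed;
\item or it admits a reply to a sum of parity $s$.
\end{itemize}
Part (1) should be stated and proved in that form.
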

\begin{proof}
    The reactive mechanism (1) is the synthesis of our interior and boundary response lemmas (Lemmas \ref{lem:R0_normal}, \ref{lem:R0_border}, \ref{lem:R1_normal}, \ref{lem:R1_border}, and \ref{lem:Rs_normal}). For proximity moves made by the Next Player, Lemmas \ref{lem:R1_reduce} and \ref{lem:Rs_reduce} demonstrate that the state inherently fractures into the desired inverted parity components without requiring a response. If the Next Player makes such a blunder, the responding player simply treats the resulting independent smaller components as new initial states and applies the proactive mechanism to them.
    The proactive mechanism (2) is guaranteed for empty boards by Lemma \ref{lem:R0_reduce}, and naturally extends to $R^{(1)}$ and $R^*$ by intentionally making proximity moves.
\end{proof}

We are now equipped to formally evaluate the Sprague-Grundy values for all regular positions, which serves as the final proof of our main theorem.

\begin{theorem}\label{thm:main_SG}
    The SG values for states strictly satisfy $\mathcal{G}(R_{1}) = 0$ and $\mathcal{G}(R_{-1}) = 1$. Consequently, an empty board of length $n$ has an SG value of $0$ if $\chi_1(n) = 1$, and $1$ if $\chi_1(n) = -1$.
\end{theorem}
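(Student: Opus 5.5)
We will argue by strong induction on the number of empty cells, simultaneously for all regular positions and all finite disjunctive sums of them. The inductive claim is that for any finite family of regular components $[v_j]\in R_{s_j}$, the sum $\bigoplus_j [v_j]$ has SG value $0$ when $\prod_j s_j=1$ and SG value $1$ when $\prod_j s_j=-1$. Since every $[v_j]$ will turn out to have value $0$ or $1$, the sum's SG value is the XOR of these values. That XOR equals $0$ exactly when the number of components in $R_{-1}$ is even, i.e. exactly when $\prod_j s_j=1$. So it suffices to prove the single-component statement $\mathcal{G}([w])=0$ for $[w]\in R_1$ and $\mathcal{G}([w])=1$ for $[w]\in R_{-1}$; the statement for sums then follows from the Sprague--Grundy theorem. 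The base cases are terminal regular positions, such as $[X2]$ or $[2X]$ (from $[X\bar 2]$ via $[\bar 2]$-type classes) and the reduced empty board $[0]$. For these we check directly, using Definition~\ref{def:regular_states} and the convention that an eliminated segment is a $0$-component, that no moves exist and that they lie in $R_1$.

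For the inductive step, fix a non-terminal $[w]\in R_s$. We use $\mathcal{G}([w])=\mex\{\mathcal{G}(u): [w]\mapsto[u]\}$ and control the option set from both sides.

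First, no option has value $\tfrac{1-s}{2}\cdot 0+\tfrac{1+s}{2}\cdot 0$ relative to $s$; concretely, when $s=1$ no option has value $0$, and when $s=-1$ no option has value $1$. Let $[w]\mapsto[u]$ be any move. By the reactive half of Theorem~\ref{thm:reduction_mechanism}, one of two things happens. Either $[u]$ already decomposes into regular components with parity product $-s$ (the proximity cases, Lemmas~\ref{lem:R1_reduce} and~\ref{lem:Rs_reduce}), or $[u]$ has a reply $[u]\mapsto[v]$ with $[v]\cong\bigoplus[v_j]$ and $\prod s_j=s$. In the second case, the induction hypothesis applied to the smaller sum $[v]$ gives $\mathcal{G}([v])=\tfrac{1-s}{2}$. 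Now suppose $s=1$. Then $\mathcal{G}([v])=0$, so $[u]$ has an option of value $0$, hence $\mathcal{G}([u])\neq0$. In the first case, the induction hypothesis gives $\mathcal{G}([u])=1\neq0$ directly. Now suppose $s=-1$. We must show $\mathcal{G}([u])\neq 1$. In the proximity case the induction gives $\mathcal{G}([u])=0$. In the reactive case the reply shows only that $[u]$ has an option of value $1$, which by itself does not exclude $\mathcal{G}([u])=1$. This is the main obstacle.

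To handle it, we add $*1$ and use the reformulation $\mathcal{G}([w])=1 \iff [w]\oplus *1$ is a $\mathscr{P}$-position. We then prove the stronger inductive statement that $\bigoplus_j[v_j]\oplus *^{\epsilon}$ is a $\mathscr{P}$-position whenever $(-1)^{\epsilon}\prod_j s_j=1$ with $\epsilon\in\{0,1\}$. This is a pure win/loss claim, so the second player needs a strategy. When the opponent moves in some component, the second player answers with the reactive reply from Theorem~\ref{thm:reduction_mechanism}(1), which restores the invariant. In the proximity-blunder case, the second player instead plays the proactive move of Theorem~\ref{thm:reduction_mechanism}(2) in that fractured component, or takes the $*1$, whichever restores the invariant. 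When the opponent takes the $*1$, the product becomes $-1$ with $\epsilon=0$. The second player then applies Theorem~\ref{thm:reduction_mechanism}(2) to a non-terminal component to flip one factor back. Such a component exists, because if all components were terminal they would lie in $R_1$, contradicting product $-1$.

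Finally, a position that is $\mathscr{P}$ with and without $*1$ in the appropriate pattern pins its value down. For $[w]\in R_1$, $[w]$ is $\mathscr{P}$, so $\mathcal{G}=0$. For $[w]\in R_{-1}$, $[w]\oplus*1$ is $\mathscr{P}$, so $\mathcal{G}=1$. The main theorem then follows by applying this to $[n]\in R^{(0)}_{\chi_1(n)}$.
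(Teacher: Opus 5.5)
Your proof reaches the theorem from the same ingredients as the paper: Theorem~\ref{thm:reduction_mechanism}, strong induction, and additivity of SG values over sums. However, the ``main obstacle'' you identify does not exist, so the detour through $*1$ is unnecessary. If $[u]$ has an option of value $1$, then $\mathcal{G}([u])=\mex\{\dots,1,\dots\}\neq 1$, because a mex never lies in the set it is taken over. This is exactly how the paper finishes Case~2. For $[w]\in R_{-1}$, every option has a reactive reply of value $1$, so no option has value $1$ and $\mathcal{G}([w])\le 1$. (Your direct treatment of proximity options, which themselves have value $0$, is actually cleaner than the paper's.) The proactive move of Theorem~\ref{thm:reduction_mechanism}(2) then supplies an option of value $0$, giving $\mathcal{G}([w])=1$. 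Your first paragraph never states this last step, the existence of an option of value $0$ when $s=-1$; it only appears implicitly inside your $*1$ argument. Also, your expression $\tfrac{1-s}{2}\cdot 0+\tfrac{1+s}{2}\cdot 0$ is identically $0$; you mean $\tfrac{1-s}{2}$.

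The $*1$ reformulation is still sound. The claim that $\bigoplus_j[v_j]\oplus *^{\epsilon}$ is a $\mathscr{P}$-position whenever $(-1)^{\epsilon}\prod_j s_j=1$ is the outcome-language version of the same induction, but it buys nothing extra. If you keep it, three details need fixing.
\begin{enumerate}
\item In the proximity case with $\epsilon=0$ there is no $*1$ to take, and the fractured component may be entirely terminal. For example, in $[X1]\oplus[1]$ the opponent can play $[X1]\mapsto[XX]$, and the reply must be $[1]\mapsto[X]$. So the flipping proactive move must go into an arbitrary non-terminal component, justified exactly as in your $*1$-taken case, not necessarily ``that fractured component''.
\item The reductions of Lemma~\ref{redrule} are game equalities, not literal decompositions. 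The strategy should therefore read: every option of an invariant sum has an option \emph{equal} to a strictly smaller invariant sum. That is what the induction actually uses.
\item Your base cases are misidentified. $[X2]$ is not terminal, since it moves to $[XX1]$ and $[X1X]$. It lies in $R_{-1}^{(1)}$ because $\chi_2(0)\chi_2(2)=-1$, and its value is $1$. The only terminal regular positions are the empty board $[0]$ and $[X]=[0X0]$, both in $R_1$. This is the fact your ``some component must be non-terminal'' step really relies on.
\end{enumerate}
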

\begin{proof}
    We proceed by strong induction on the structural length of the game state. It is trivial to verify that the terminal base cases conform to the theorem. Assume the theorem holds for all regular positions strictly shorter than the current state $[w] \in R_s$.

    %{\color{red} The following paragraph goes too far. I will fix it.}

    Notice the profound algebraic isomorphism linking our parity index $s \in \{1, -1\}$ to the nim-sum operation over SG values $\{0, 1\}$. By the inductive hypothesis, $v_j \in R_{s_j}$ implies $\mathcal{G}([v_j]) = \frac{1 - s_j}{2}$. Therefore, for any disjunctive sum of regular positions, the nim-sum evaluation is precisely governed by their multiplicative parity:
    \begin{equation}\label{eq:isomorphism}
        \begin{aligned}
            \bigoplus_j \mathcal{G}([v_j]) = 0 &\iff \prod_j s_j = 1, \quad \text{and} \\
            \quad \bigoplus_j \mathcal{G}([v_j]) = 1 &\iff \prod_j s_j = -1.
        \end{aligned}
    \end{equation}
    
    \textbf{Case 1: $[w] \in R_1$.} \\
    By Theorem \ref{thm:reduction_mechanism} (1), for any arbitrary move $[w] \mapsto [w']$ chosen by the first player, the second player can always reply with $[w'] \mapsto \bigoplus_j [v_j]$ such that $\prod_j s_j = 1$. By equation (\ref{eq:isomorphism}), $\mathcal{G}(\bigoplus_j [v_j]) = 0$. 
    By the definition of the SG function, $\mathcal{G}([w']) = \mex\{0, \dots\} > 0$ for all possible first moves $[w']$. Consequently,
    \begin{equation*}
        \mathcal{G}([w]) = \mex_{w'} \{\mathcal{G}([w'])\} = 0.
    \end{equation*}

    \textbf{Case 2: $[w] \in R_{-1}$.} \\
    Similarly, by Theorem \ref{thm:reduction_mechanism} (1), for any arbitrary move $[w] \mapsto [w']$ chosen by the first player, the second player can reply with a reduction yielding $\prod_j s_j = -1$. By equation (\ref{eq:isomorphism}), $\mathcal{G}(\bigoplus_j [v_j]) = 1$. Thus, $\mathcal{G}([w']) = \mex\{1, \dots\} \neq 1$ for all $[w']$.
    This strictly implies $\mathcal{G}([w]) \le 1$. 
    To prove $\mathcal{G}([w]) = 1$, we must show there exists at least one option $[w']$ such that $\mathcal{G}([w']) = 0$. By Theorem \ref{thm:reduction_mechanism} (2), the first player has a proactive move $[w] \mapsto \bigoplus_j [u_j]$ such that the resulting multiplicative parity is $-(-1) = 1$. By equation (\ref{eq:isomorphism}), this specific option has an SG value of $0$. Therefore:
    \begin{equation*}
        \mathcal{G}([w]) = \mex\{0, \dots\} = 1.
    \end{equation*}
    
    This completes the induction. Since any initial empty board $[n]$ is, by definition, an element of $R_{\chi_1(n)}^{(0)}$, the main theorem follows directly.
\end{proof}

\section{Open Problems and Further Extensions}

Although the initial empty-board problem for Inverse Treblecross has been resolved, our investigation has uncovered several intriguing structural patterns that extend beyond our current theoretical framework. 
Because our ``reduction move'' strategy strictly relies on immediate parity responses, it is insufficient to fully characterize these broader phenomena. 
In this section, we propose several conjectures and extensions that highlight promising avenues for future research, likely requiring more sophisticated algebraic or combinatorial tools.

\subsection{Coverage of Regular Positions}

First, we observe that the regular positions defined in our main theorem almost exclusively contain no more than one placed piece (with $[X \bar{4} X]$ and $[X \bar{9} X]$ being the only exceptions). 
Computational evidence strongly suggests that our definition of $R$ nearly exhausts the universe of simple states with an SG value of $0$ or $1$. 
Specifically, we propose the following completeness conjecture:

\begin{conjecture}\label{conj:cover}
    The regular set $R$ contains all board states that possess at most one piece and satisfy $\mathcal{G} \le 1$, with exactly four isolated exceptions:
    \[
        [1 X 1], \quad [1 X 4], \quad [4 X 1], \quad \text{and} \quad [4 X 4].
    \]
\end{conjecture}

These four exceptional states act as isolated anomalies: they are the unique known states within their respective residue classes ($[\bar{1} X \bar{1}]$, $[\bar{1} X \bar{4}]$, $[\bar{4} X \bar{1}]$, and $[\bar{4} X \bar{4}]$) that evaluate to an SG value of $1$. Empirical data indicates that all other sufficiently long states in these classes strictly evaluate to $\mathcal{G} > 1$.

\subsection{Asymptotic Periodicity of Single-piece States}

Evaluating the exact SG sequence for non-regular positions remains a formidable challenge. 
However, for the general single-piece configuration $[a X b]$, empirical calculations (partially displayed in Table \ref{tab:single_piece}) reveal a striking asymptotic periodicity mirroring that of the empty board.

\begin{table}[htpb]
    \centering
    \caption{SG values $\mathcal{G}([a X b])$ for small lengths $a$ and $b$.}
    \label{tab:single_piece}
    \tiny
    \begin{tabular}{c|cccccccccccccccc}
         $a \setminus b$ & 0 & 1 & 2 & 3 & 4 & 5 & 6 & 7 & 8 & 9 & 10 & 11 & 12 & 13 & 14 & 15 \\
        \hline
          0 & 0 & 1 & 1 & 0 & 3 & 1 & 0 & 0 & 1 & 2 & 0 & 1 & 1 & 0 & 3 & 1 \\
          1 & 1 & 1 & 2 & 3 & 1 & 2 & 3 & 3 & 2 & 3 & 3 & 2 & 2 & 3 & 2 & 2 \\
          2 & 1 & 2 & 0 & 1 & 2 & 0 & 3 & 1 & 0 & 3 & 1 & 2 & 0 & 1 & 2 & 0 \\
          3 & 0 & 3 & 1 & 0 & 3 & 1 & 2 & 0 & 1 & 2 & 0 & 3 & 1 & 0 & 3 & 1 \\
          4 & 3 & 1 & 2 & 3 & 1 & 2 & 3 & 3 & 2 & 4 & 3 & 2 & 2 & 3 & 5 & 2 \\
          5 & 1 & 2 & 0 & 1 & 2 & 0 & 3 & 1 & 0 & 3 & 1 & 2 & 0 & 1 & 2 & 0 \\
          6 & 0 & 3 & 3 & 2 & 3 & 3 & 2 & 2 & 3 & 2 & 2 & 3 & 3 & 2 & 3 & 3 \\
          7 & 0 & 3 & 1 & 0 & 3 & 1 & 2 & 0 & 1 & 2 & 0 & 3 & 1 & 0 & 3 & 1 \\
          8 & 1 & 2 & 0 & 1 & 2 & 0 & 3 & 1 & 0 & 3 & 1 & 2 & 0 & 1 & 2 & 0 \\
          9 & 2 & 3 & 3 & 2 & 4 & 3 & 2 & 2 & 3 & 5 & 2 & 3 & 3 & 2 & 4 & 3 \\
         10 & 0 & 3 & 1 & 0 & 3 & 1 & 2 & 0 & 1 & 2 & 0 & 3 & 1 & 0 & 3 & 1 \\
         11 & 1 & 2 & 2 & 3 & 2 & 2 & 3 & 3 & 2 & 3 & 3 & 2 & 2 & 3 & 2 & 2 \\
    \end{tabular}
    \normalsize
\end{table}

\begin{conjecture}\label{conj:preperiod}
    For any fixed $a$, the SG sequence of $[a X b]$ with respect to $b$ is eventually periodic with period $10$. Furthermore, the state transitions exhibit a clean nim-sum inversion: there exists a minimal preperiod $b_a$ such that for all $b > b_a$,
    \[
        \mathcal{G}([a X (b+5)]) = \mathcal{G}([a X b]) \oplus 1.
    \]
    Specifically, we conjecture that the preperiod is bounded strictly by the aforementioned isolated anomalies:
    \[
        b_a =
        \begin{cases}
            5, & a \in \{1, 4\}; \\
            0, & \text{otherwise}.
        \end{cases}
    \]
\end{conjecture}

If Conjecture \ref{conj:preperiod} holds, it not only implies Conjecture \ref{conj:cover} but also fully determines the SG values for all single-piece configurations based entirely on a finite set of initial evaluations. 

Unfortunately, this asymptotic regularity sharply deteriorates for configurations containing multiple interior pieces. 
Empirical evaluations of states like $[X a X b X]$ (where the boundaries are occupied) fail to display any simple periodicity within computationally feasible ranges. 

In attempting to resolve these complex interior states, we have observed potential higher-order disjunctive sum decompositions and boundary absorptions (prefix/suffix reductions). For example, computational data supports the following structural simplifications for arbitrary segments $u$ and $v$:
\begin{align*}
    \mathcal{G}([u X (5i+4) X v]) &= \mathcal{G}([u X 2]) \oplus \mathcal{G}([2 X v]) \oplus (i \bmod 2), \\
    \mathcal{G}([10 u]) &= \mathcal{G}([5 u]) \oplus 1, \\
    \mathcal{G}([8 X u]) &= \mathcal{G}([2 X u]), \\
    \mathcal{G}([X 7 X u]) &= \mathcal{G}([2 X u]) \oplus 1.
\end{align*}
While we have strictly proven the base case $i=0$ for the central splitting rule, confirming the general cases and uncovering a unified algebraic framework for these prefix reductions remains an open challenge.

\subsection{Generalization to Reverse $k$-cross}

A natural extension of this research is Inverse $k$-cross, where placing a piece is strictly prohibited if it completes a contiguous sequence of $k$ pieces. 
Table \ref{tab:k_cross} presents the SG values for an empty board of length $l$ across various values of $k \le 13$.

\begin{table}[htpb]
    \centering
    \caption{SG values of the empty board for Inverse $k$-cross.}
    \label{tab:k_cross}
    \tiny
    \begin{tabular}{|c|cccccc|cccccc|}
        \hline
        $l \setminus k$ & 2 & 4 & 6 & 8 & 10 & 12 & 3 & 5 & 7 & 9 & 11 & 13 \\
        \hline
        0  & 0 & 0 & 0 & 0 & 0 & 0 & 0 & 0 & 0 & 0 & 0 & 0 \\
        1  & 1 & 1 & 1 & 1 & 1 & 1 & 1 & 1 & 1 & 1 & 1 & 1 \\
        2  & 1 & 0 & 0 & 0 & 0 & 0 & 0 & 0 & 0 & 0 & 0 & 0 \\
        3  & 2 & 1 & 1 & 1 & 1 & 1 & 0 & 1 & 1 & 1 & 1 & 1 \\
        4  & 0 & 1 & 0 & 0 & 0 & 0 & 1 & 0 & 0 & 0 & 0 & 0 \\
        5  & 3 & 0 & 1 & 1 & 1 & 1 & 1 & 0 & 1 & 1 & 1 & 1 \\
        6  & 1 & 2 & 1 & 0 & 0 & 0 & 0 & 1 & 0 & 0 & 0 & 0 \\
        7  & 1 & 1 & 0 & 1 & 1 & 1 & 1 & 0 & 0 & 1 & 1 & 1 \\
        8  & 0 & 0 & 1 & 1 & 0 & 0 & 1 & 0 & 1 & 0 & 0 & 0 \\
        9  & 3 & 1 & 2 & 0 & 1 & 1 & 0 & 1 & 0 & 0 & 1 & 1 \\
        10 & 3 & 2 & 0 & 1 & 1 & 0 & 0 & 0 & 1 & 1 & 0 & 0 \\
        11 & 2 & 0 & 1 & 0 & 0 & 1 & 1 & 1 & 1 & 0 & 0 & 1 \\
        12 & 2 & 1 & 0 & 2 & 1 & 1 & 0 & 0 & 0 & 1 & 1 & 0 \\
        13 & 4 & 1 & 1 & 1 & 0 & 0 & 0 & 0 & 1 & 0 & 0 & 0 \\
        14 & 0 & 0 & 0 & 0 & 1 & 1 & 1 & 1 & 0 & 0 & 1 & 1 \\
        15 & 5 & 1 & 3 & 1 & 2 & 0 & 1 & 0 & 1 & 1 & 0 & 0 \\
        16 & 2 & 0 & 1 & 0 & 0 & 1 & 0 & 0 & 0 & 0 & 1 & 1 \\
        17 & 2 & 0 & 0 & 1 & 1 & 0 & 1 & 1 & 1 & 1 & 1 & 0 \\
        18 & 3 & 1 & 1 & 0 & 0 & 2 & 1 & 0 & 1 & 0 & 0 & 1 \\
        19 & 3 & 2 & 1 & 1 & 1 & 1 & 0 & 1 & 0 & 1 & 1 & 0 \\
        20 & 0 & 0 & 0 & 3 & 0 & 0 & 0 & 0 & 1 & 0 & 0 & 0 \\
        21 & 1 & 1 & 1 & 0 & 1 & 1 & 1 & 0 & 0 & 1 & 1 & 1 \\
        22 & 1 & 0 & 0 & 1 & 0 & 0 & 0 & 1 & 0 & 0 & 0 & 0 \\
        23 & 3 & 3 & 1 & 0 & 1 & 1 & 0 & 0 & 1 & 0 & 1 & 1 \\
        24 & 0 & 1 & 0 & 1 & 0 & 0 & 1 & 0 & 0 & 1 & 0 & 0 \\
        25 & 2 & 2 & 0 & 1 & 3 & 1 & 1 & 1 & 1 & 0 & 1 & 1 \\
        \hline
    \end{tabular}
    \normalsize
\end{table}

This broader domain reveals a distinct parity divergence:
\begin{enumerate}
    \item When $k=2$, the game is structurally isomorphic to Dawson's Kayles. Kenyon and Schwenk have proven that its SG sequence is eventually periodic (entering a period of $34$ starting from $l=52$).
    \item When $k > 2$ is even, the SG sequences appear highly chaotic, and it is entirely unknown whether they eventually become periodic.
    \item When $k > 2$ is odd, the empty board exhibits profound symmetrical periodicity strongly mirroring our results for $k=3$.
\end{enumerate}

Based on these observations, we propose a generalized structural recurrence for all odd constraints:

\begin{conjecture}\label{conj:k_odd}
    For any Inverse $k$-cross game where $k = 2m+1$ ($m \ge 1$), the SG value of the empty board $[a]$ strictly evaluates to $\mathcal{G}([a]) \le 1$. Furthermore, the sequence exhibits rigid reflectional periodicity dictated by $m$:
    \begin{align*}
        \mathcal{G}([2+3m+a]) &= \mathcal{G}([a]) \oplus (m \bmod 2), \\
        \mathcal{G}([2+5m-a]) &= \mathcal{G}([a]) \oplus (m \bmod 2).
    \end{align*}
\end{conjecture}

Our main theorem firmly establishes this conjecture for $m=1$ ($k=3$). If proven globally, this recurrence uniquely determines the exact SG values of the empty board for all odd $k > 2$, as the base cases $a < k$ are trivially determined by pure parity ($\mathcal{G}([a]) = a \bmod 2$).

\subsection{Circular Chessboards}

Finally, analyzing Inverse Treblecross on a circular ring introduces nontrivial boundary conditions. Let $(l)$ denote an empty circular board of length $l$. For $k=3$, the SG values exhibit at least partial periodicity.

\begin{proposition}
    The SG value of an empty circular board evaluates to:
    \[
        \mathcal{G}((l)) =
        \begin{cases}
            0, & l \pmod{10} \in \{0,2,3,4,6,7,8\}; \\
            1, & l \pmod{10} = 5.
        \end{cases}
    \]
\end{proposition}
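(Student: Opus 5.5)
The plan rests on one structural observation: the empty ring $(l)$ has, up to rotation, exactly one option, namely the ring $O_l$ carrying a single piece. Hence $\mathcal{G}((l))=\mex\{\mathcal{G}(O_l)\}$. This is $0$ exactly when $\mathcal{G}(O_l)\neq 0$, and $1$ exactly when $\mathcal{G}(O_l)=0$. So the proposition reduces to two claims: for $l \bmod 10\in\{0,2,3,4,6,7,8\}$, $O_l$ has at least one option of SG value $0$; and for $l\equiv 5$, every option of $O_l$ has nonzero SG value.

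The key tool is a translation of ring positions into linear positions, using ring analogues of Lemma \ref{redrule}. From $O_l$, a piece placed adjacent to the existing piece creates an $XX$ block. Both cells flanking the block become forbidden, and the remaining arc behaves as an empty linear board, so this option equals $[l-4]$. A piece placed at distance $2$ creates $X1X$. The middle cell is dead, and the arc of length $l-3$ is bounded by two pieces, so the option equals $[X (l-3) X]$ by the argument of Lemma \ref{redrule}(1)--(2). I will check carefully that wrap-around triples impose no extra constraint once an $XX$ or $X1X$ is present; that seems to be the case, since all cells at distance $\le 1$ from the block are already forbidden or dead. With these identifications and Theorem \ref{thm:main_SG}, the cases $l\equiv 0,3,4,6$ follow because $\chi_1(l-4)=1$, so $[l-4]\in R_1^{(0)}$ has value $0$. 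The case $l\equiv 7$ follows because $[X (l-3) X]\in[X\bar 4 X]\subset R_1^*$ has value $0$.

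The residues $l\equiv 2,8$ are not handled by proximity replies, since $\chi_1(l-4)=-1$ and $l-3\equiv 9$ or $5$. For these I would try a far reply in $O_l$, splitting the ring into two arcs $a,b$ with $a+b=l-2$ between two isolated pieces. The resulting two-piece ring is not a disjunctive sum, because triples across each piece couple the arc ends. So I would first prove a ring version of the reactive strategy of Lemmas \ref{lem:R1_normal} and \ref{lem:Rs_normal}. The aim is to exhibit, for a suitable choice of $(a,b)$, a position that is P by showing that every opponent move in either arc admits an adjacent reply. Such a reply cuts the ring into a linear sum of pieces from $R$ with product parity $1$, using Lemma \ref{lem:alpha} and Proposition 2.1 as in the linear case. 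I would first test the symmetric split $a\approx b$, hoping a Tweedledum--Tweedledee argument applies.

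For $l\equiv 5$ the task is to show that every option of $O_l$ is nonzero. The adjacent option is $[l-4]$, with $l-4\equiv 1$, so its value is $1$. The distance-$2$ option $[X \bar 2 X]$ is not regular. I would show it is nonzero by exhibiting a move to a P-position: a move next to one end gives $[X X \bar 1 X]\cong[\bar 0 X]\in R_1^{(1)}$. Far options again give two-piece rings, and for each I must find a reply to a P-position using the adjacent-reply mechanism. I expect this two-piece-ring analysis, needed both here and for $l\equiv 2,8$, to be the main obstacle, since it is exactly where the wrap-around coupling prevents a direct appeal to Theorem \ref{thm:reduction_mechanism}.
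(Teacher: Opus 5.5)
Your reduction $\mathcal{G}((l))=\mex\{\mathcal{G}(O_l)\}$ and your two proximity identifications ($[l-4]$ and $[X(l-3)X]$) are correct, and they settle $l\equiv 0,3,4,6,7$. However, the proposal leaves $l\equiv 2,8$ and the far options for $l\equiv 5$ unproved, so as written it is incomplete. In both places the missing step is short.

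\textbf{The residues $l\equiv 2,8$.} You only hope that the symmetric split works. It does, and this antipodal mirror is exactly the paper's argument, which the paper applies to all even $l$. No ring version of the reactive strategy is needed. For even $l\ge 6$ put $\rho(x)=x+l/2$. Every $\rho$-invariant position is a $\mathscr{P}$-position. Suppose the opponent legally plays $x$. Then $\rho(x)$ is empty, by invariance and because $\rho(x)\neq x$. Playing $\rho(x)$ completes no triple. Such a triple cannot contain $x$, since $x$ and $\rho(x)$ are at cyclic distance $l/2\ge 3$. So it would consist of $\rho(x)$ and two cells occupied before $x$ was placed. Its image under $\rho$ would then have been completed by $x$ itself, contradicting the legality of $x$. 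Replying at $l/2$ to the piece at $0$ produces a $\rho$-invariant position. Hence $O_l$ has a $\mathscr{P}$-option and $\mathcal{G}((l))=0$.

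\textbf{The far options for $l\equiv 5$.} The wrap-around coupling you expect to be the main obstacle disappears after a single adjacent reply. Take pieces at $0$ and $j$ with $3\le j\le l-3$. Both $j\pm 1$ are legal replies, since $0$ and $j$ are too far apart to share a window of three cells with them.
\begin{itemize}
\item A reply at $j+1$ kills $j-1$ and $j+2$ permanently. What remains is exactly the linear board $[(l-j-3)\,X\,(j-2)]$, with the piece at $0$ as the $X$. The triple $\{l-1,0,1\}$ is just the usual constraint across that $X$.
\item A reply at $j-1$ likewise leaves $[(l-j-2)\,X\,(j-3)]$.
\end{itemize}
In both boards the two arc lengths sum to $l-5\equiv 0$. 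By $\chi_2(-x)=\chi_2(x)$, the parity is therefore $\chi_2(x)^2$, with $x=j-2$ for the first reply and $x=j-3$ for the second. The zero set of $\chi_2$ is $\{\bar 1,\bar 4,\bar 6,\bar 9\}$, which contains no two consecutive residues. So one of the two replies lands in $R_1^{(1)}$, which is a $\mathscr{P}$-position by Theorem \ref{thm:main_SG}. Combined with your adjacent and distance-$2$ checks, every option of $O_l$ is nonzero. Hence $\mathcal{G}(O_l)=0$ and $\mathcal{G}((l))=1$. This is what the paper's one-line remark for $l\equiv 5$ amounts to.
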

\begin{proof}
    For $l \pmod{10} \in \{4,6,7,0,3\}$, an arbitrary first move by the Next Player places a piece, strictly transforming the ring into a linear segment. The responding player can immediately execute a proximity move adjacent to the first piece, effectively absorbing $4$ spaces and reducing the game to the linear configuration $[l-4]$. By Theorem \ref{thm:main_SG}, $\chi_1(l-4) = 1$, yielding $\mathcal{G}([l-4]) = 0$.

    When $l$ is strictly even ($l \pmod{10} \in \{0,2,4,6,8\}$), the standard Tweedledum-Tweedledee strategy applies seamlessly: the responding player universally mirrors the first player's move across the circle's axis of symmetry. 
    
    For the exceptional case $l \pmod{10} = 5$, analogous to our linear analysis, the responding player can invariably find a reduction move that fractures the circle into a regular linear board $[v] \in R_1$, strictly maintaining an SG value of $0$.
\end{proof}

However, for $l \equiv 1 \pmod{10}$, computational data reveals that the SG sequence is not perfectly periodic, as $\mathcal{G}((21)) = 0$ while $\mathcal{G}((11)) = 1$. 

It is also worth highlighting why the traditional Tweedledum-Tweedledee symmetry strategy fails on linear boards of even length, yet succeeds on circular ones. On a linear board, mimicking a move symmetrically can result in an illegal placement. For instance, if the first player places a piece near the center to create $[w 1 X 4 w^R]$, the symmetric mirror requires placing a piece separated by exactly two empty spaces, forming $[w 1 X 2 X 1 w^R]$. Subsequent moves in this central zone frequently force three-in-a-row infractions that break the symmetry.

\section{Acknowledgement}

We are grateful to Zhiyang Hang for performing the calculations for $n \le 50$; the resulting numerical evidence provided crucial motivation for our general proof. 
We are particularly indebted to Zi'ang Yan, Zhujun Zhang, and Zhaobo Han for their extensive contributions. Their keen observation of the underlying structure within the Sprague-Grundy values directly inspired the methodology developed in this paper. 
Finally, we extend our thanks to Hongjie Zhang and Yunfei Lyu for their insightful comments and stimulating discussions.

\bibliographystyle{plain}
\bibliography{References}

\end{document}